\documentclass[11pt]{amsart}


\parskip2pt plus2pt minus2pt
\usepackage[top=3.2cm,bottom=2.7cm,left=2.7cm,right=2.7cm]{geometry}

\usepackage{amsmath}
\usepackage{amssymb}
\usepackage{amsthm}

\newtheorem{thm}{Theorem}
\newtheorem{lemma}[thm]{Lemma}
\newtheorem{cor}[thm]{Corollary}

\DeclareMathOperator\erf{erf}

\newcommand{\E}{\mathbb{E}}

\newcommand{\be}{\begin{equation}}
\newcommand{\ee}{\end{equation}}

\newcommand{\C}{\mathbb{C}}

\newcommand{\EE}{\mathbb{E}}

\newcommand{\ol}{\overline}

\newcommand{\CC}{\mathbb{C}}

\renewcommand{\P}{\mathbb{P}}

\theoremstyle{remark}

\usepackage{xcolor}

\usepackage{mathtools} 
\mathtoolsset{showonlyrefs,showmanualtags} 

\title[The arc length of a random lemniscate]{The arc length of a random lemniscate}
\author{Erik Lundberg and Koushik Ramachandran}
\date{}
\begin{document}


\begin{abstract}
A polynomial lemniscate is a curve in the complex plane 
defined by $\{z \in \C:|p(z)|=t\}$. 
Erd\"os, Herzog, and Piranian 
posed the extremal problem of determining the 
maximum length of a lemniscate $\Lambda=\{z \in \mathbb{C}:|p(z)|=1\}$ 
when $p$ is a monic polynomial of degree $n$. 
In this paper, we study the 
length and topology of a random lemniscate 
whose defining polynomial has independent Gaussian coefficients.
In the special case of the Kac ensemble
we show that the length approaches a nonzero constant as $n \rightarrow \infty$.
We also show that the average number of connected components 
is asymptotically $n$,
and we observe a positive probability (independent of $n$)
of a giant component occurring.
\end{abstract}

\maketitle

\section{Introduction}
A (polynomial) lemniscate is a curve defined in the complex plane by the equation $|p(z)| = t$, 
where $p$ is a polynomial.
If the degree of $p$ is $n,$ then from the conjugation-invariant equation $p(z) \overline{p(z)} = t^2$,
it is apparent that the lemniscate is a real algebraic curve of degree $2n$.
Calculating the length of a lemniscate is a problem of classical Mathematics 
that played a role in the development of elliptic integrals.
Namely, the length of Bernoulli's lemniscate $|z^2-1| = 1$
is an elliptic integral of the second kind
(the same type of integral that appears in classical mechanics,
as the period of a pendulum, and in classical statics, as the length of an elastica).

\subsection{The Erd\"os lemniscate problem} 
Erd\"os, Herzog, and Piranian \cite{Erdos} 
posed the extremal problem  of determining the 
maximum length of a lemniscate
\be \Lambda=\left\{z\in \CC\,:\, |p(z)| = 1 \right\}\ee
when $p$ is a monic polynomial of degree $n$.
The problem was restated by Erd\"os several times (e.g., see \cite{Erdos2})
and is often referred to as the \emph{Erd\"os lemniscate problem}.
Taking $p$ monic guarantees that the length of the lemniscate is bounded, 
for instance by $2\pi n$ \cite{Danchenko}.
The maximum was conjectured \cite{Erdos} to occur for the 
so-called \emph{Erd\"os lemniscate}, i.e, when $p(z) = z^n-1$.
This conjecture remains open
but has seen positive results \cite{Borwein, EremHay, KuTk},
and Fryntov and Nazarov \cite{FryntovNazarov} have proved 
that Erd\"os lemniscate is indeed 
a \emph{local} maximum and that as $n \rightarrow \infty$ 
the maximum length is $2n + o(n)$ which is asymptotic to the conjectured extremal.

 \begin{figure}
\begin{center}
\includegraphics[width=0.4\textwidth]{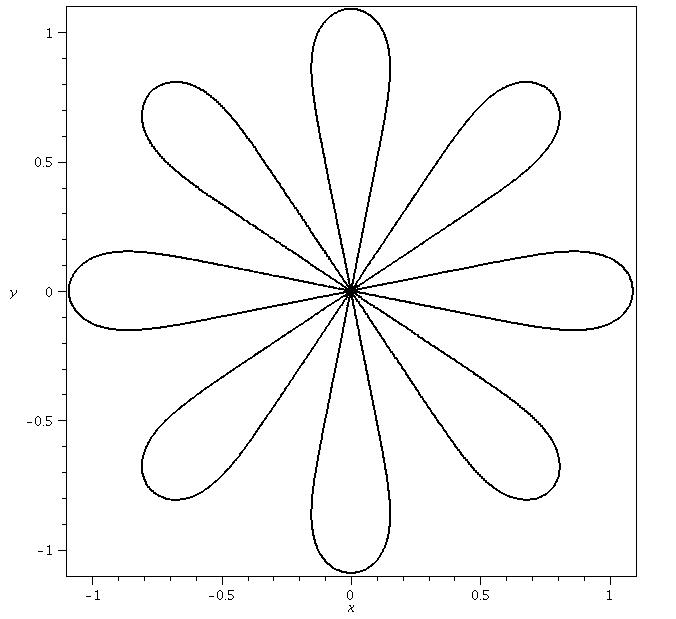}
\caption{\label{fig:ErdosLemni} The Erd\"os lemniscate  for $n=8$.}
\end{center}
\end{figure}

\subsection{The arc length of a random lemniscate}
A random variable X has the 
standard complex Gaussian distribution 
if it has density $\frac{1}{\pi}\exp(-|z|^2)$ on $\mathbb{C}.$ 
We denote this by $X\sim N_{\CC}(0,1).$

\vspace{0.1in}

\noindent Motivated by seeking a broad point of view on the Erd\"os lemniscate problem,
we give a probabilistic treatment of the length,
by studying the \emph{average} outcome for a random polynomial lemniscate. 
We select $\Lambda = \Lambda_n$ randomly by taking $p_n(z)$ to be a random polynomial 
from the Kac ensemble,
\be\label{eq:KacModel}
p_n(z)=\sum_{k=0}^{n}a_{k}z^k ,\ee
where $a_{k} \sim N_{\CC}(0,1)$ are independent, 
identically distributed complex Gaussians. The resulting distribution for the random curve $\Lambda$ is invariant under rotation of the angular coordinate.
Indeed, we have:
\be 
|p_n(e^{i\theta }z)|=\left|\sum_{k=0}^{n}a_{k}e^{i k \theta}z^k\right|,
\ee
 and invariance follows from the observation that $b_{k}= a_{k}e^{i k \theta}$ are i.i.d and distributed as $N_{\CC}(0,1)$.

 \begin{figure}[t]
\begin{center}
\includegraphics[width=0.31\textwidth]{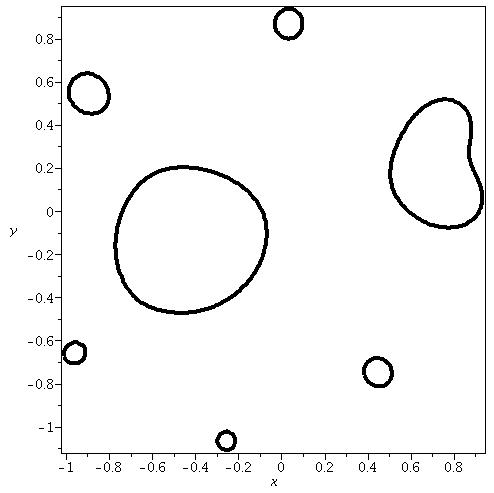}
\includegraphics[width=0.32\textwidth]{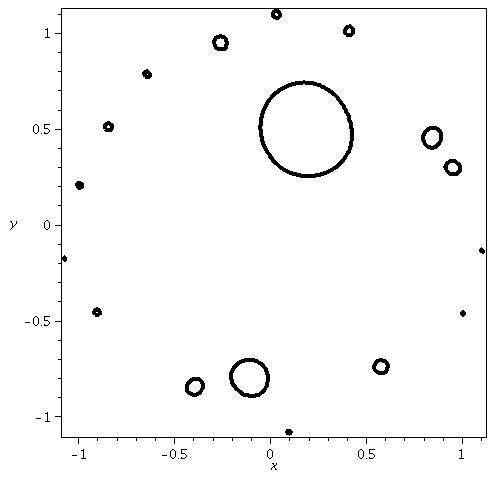}
\includegraphics[width=0.335\textwidth]{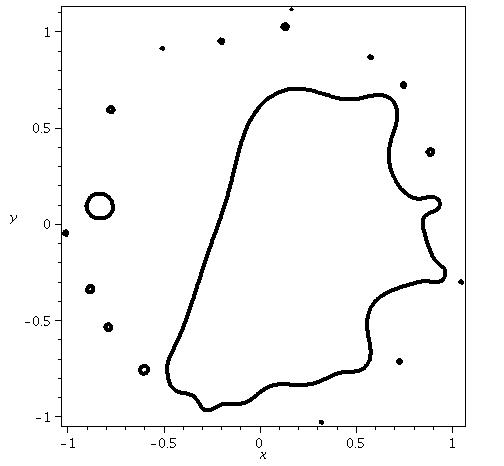}
\includegraphics[width=0.315\textwidth]{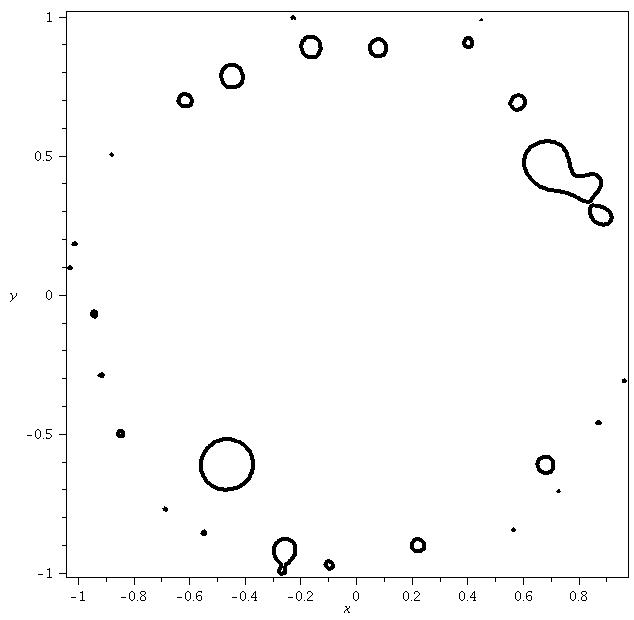}
\includegraphics[width=0.315\textwidth]{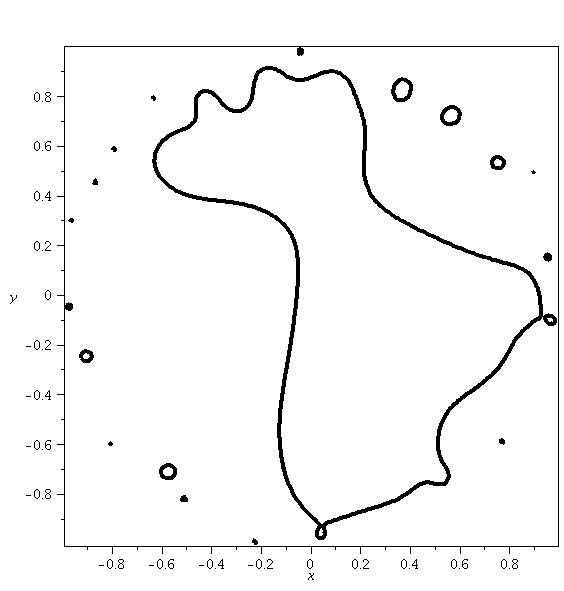}
\includegraphics[width=0.31\textwidth]{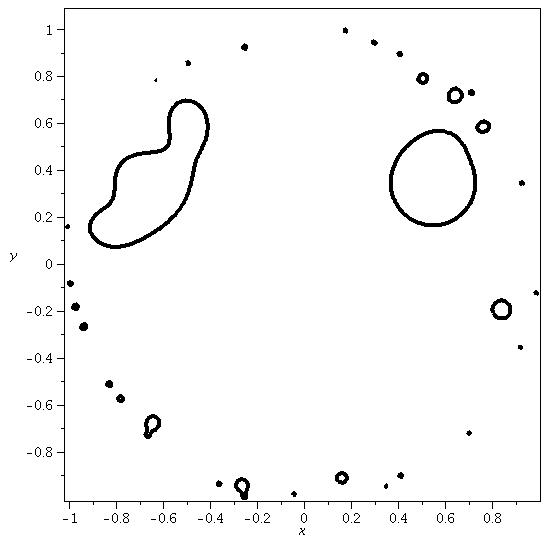}
\caption{\label{fig:samples} Random lemniscates using Kac polynomials of degree 
$n=10, 20, 30, 40, 50, 60$ (from left to right).}
\end{center}
\end{figure}

We now state our main result.
 
 \begin{thm}\label{thm:Kac}
Consider a sequence of random polynomials $p_n(z) = \sum_{k=0}^{n}a_kz^k,$ where the $a_k$ are i.i.d $N_{\mathbb{C}}(0, 1).$ Let $\Lambda_n = \left\{z\in \CC\,:\, |p_n(z)| = 1 \right\}.$ Then, $$\lim_{n \rightarrow \infty} \EE |\Lambda_n| = C ,$$
 where the constant $C \approx 8.3882$ is given by 
 the integral \eqref{eq:limit} below.
 \end{thm}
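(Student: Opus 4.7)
The plan is to express $\EE|\Lambda_n|$ via a Kac--Rice formula applied to the squared modulus $|p_n(z)|^2$, viewed as a smooth real function of $z \in \CC \simeq \RR^2$. Writing $p_n = u + iv$ and using the Cauchy--Riemann equations one obtains $|\nabla|p_n|^2| = 2|p_n|\cdot|p_n'|$, so the Kac--Rice identity for the length of the level set $\{|p_n|^2 = 1\} = \Lambda_n$ reads
\begin{equation}
\EE|\Lambda_n| = \int_{\CC} f_n(z) \, \EE\!\left[\, 2|p_n'(z)| \,\bigm|\, |p_n(z)| = 1 \,\right] dA(z),
\end{equation}
where $f_n(z)$ is the density of $|p_n(z)|^2$ at $1$. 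Since $p_n(z) \sim N_{\CC}(0, \sigma_n^2(z))$ with $\sigma_n^2(z) := \sum_{k=0}^n |z|^{2k}$, the random variable $|p_n(z)|^2$ is exponentially distributed, so $f_n(z) = e^{-1/\sigma_n^2(z)}/\sigma_n^2(z)$. The pair $(p_n(z), p_n'(z))$ is jointly complex Gaussian, and a direct computation shows that conditioned on $|p_n(z)| = 1$, $p_n'(z)$ is complex Gaussian with mean of modulus $|\mu_n(z)|$ and variance $\tau_n^2(z)$, where $\mu_n$ and $\tau_n$ are explicit rational functions of $\sigma_n^2$ and its derivatives in $s = |z|^2$. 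One thereby arrives at
\begin{equation}
\EE|\Lambda_n| = \int_{\CC} \frac{2\, e^{-1/\sigma_n^2(z)}}{\sigma_n^2(z)} \, \tau_n(z) \, \rho\!\bigl(\nu_n(z)\bigr) \, dA(z),
\end{equation}
with $\nu_n := |\mu_n|/\tau_n$ and $\rho(\nu) := \EE|\nu + G|$ for $G \sim N_{\CC}(0,1)$ the Rice mean.

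By rotational invariance $\sigma_n$, $\tau_n$, $\nu_n$ depend only on $r := |z|$, so the problem reduces to a one-dimensional integral $2\pi \int_0^\infty F_n(r)\, r\, dr$. Using the closed form $\sigma_n^2(r) = (1-r^{2n+2})/(1-r^2)$ together with its $s$-derivatives, one computes the pointwise limits for $r < 1$: $\sigma_n^2 \to (1-r^2)^{-1}$, $\tau_n^2 \to (1-r^2)^{-3}$, and $\nu_n^2 \to r^2(1-r^2)$. For $r \geq 1$, $\sigma_n^2 \to \infty$ and the prefactor $e^{-1/\sigma_n^2}/\sigma_n^2$ drives the integrand to zero. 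The candidate limit is then
\begin{equation}
C = 4\pi \int_0^1 \frac{r\, e^{-(1-r^2)}}{\sqrt{1-r^2}}\, \rho\!\bigl(r\sqrt{1-r^2}\bigr)\, dr,
\end{equation}
which should agree with \eqref{eq:limit}.

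The main obstacle is to justify the interchange of limit and integral. I would split $[0,\infty)$ into three regions. For $r < 1 - \varepsilon$ the convergence of $F_n$ is uniform on compact sets, and dominated convergence applies. For $r > 1 + \varepsilon$ one has $\sigma_n^2(r) \geq r^{2n}$, so the prefactor $e^{-1/\sigma_n^2}/\sigma_n^2 = O(r^{-2n})$ dominates the at-most polynomial-in-$n$ growth of $\tau_n \rho(\nu_n)$, giving an exponentially small contribution. The delicate piece is the transition window $|r-1| \leq c/n$, where $\sigma_n^2$ is of order $n$, $\tau_n$ of order $n^{3/2}$, and $\rho(\nu_n) = O(1)$ since $\nu_n \to 0$; the integrand is then of order $n^{1/2}$ over a window of width $O(1/n)$, for a total contribution $O(n^{-1/2}) \to 0$. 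Establishing these bounds uniformly in $n$ via the closed form for $\sigma_n^2$ and standard estimates on the Rice mean $\rho$, and then letting $\varepsilon \to 0$, completes the argument.
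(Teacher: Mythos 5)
Your proposal is correct and arrives at an integral equivalent to \eqref{eq:limit}, but by a genuinely different route. The paper first converts length into an average number of line intersections via the integral-geometry (Crofton) formula, uses rotational invariance of the ensemble to fix the line direction, and then applies a one-dimensional Kac--Rice formula along lines; its integrand therefore involves $\EE|\Re t|$ for $t \sim N_\CC(b/a,\,|\Sigma|/a)$, evaluated through Lemma \ref{lemma:absreal} in terms of $\erf$. You instead apply the two-dimensional level-set Kac--Rice (co-area) formula directly to the field $|p_n|^2$, using $|\nabla |p_n|^2| = 2|p_n|\,|p_n'|$, which produces the Rice mean $\EE|t|$ of the full modulus rather than $\EE|\Re t|$. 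The two integrands differ pointwise but agree after angular integration, by the identity $\int_0^{2\pi}|\Re(e^{i\theta}w)|\,d\theta = 4|w|$ together with rotation invariance of the noise; in particular your candidate constant $4\pi\int_0^1 r\,e^{-(1-r^2)}(1-r^2)^{-1/2}\rho\bigl(r\sqrt{1-r^2}\bigr)\,dr$ does equal the paper's $C$, and your pointwise limits $\sigma_n^2 \to (1-r^2)^{-1}$, $\tau_n^2 \to (1-r^2)^{-3}$, $\nu_n \to r\sqrt{1-r^2}$ check out against \eqref{eq:a}, \eqref{eq:b}, \eqref{eq:Sigma}. What your route buys is a manifestly radial, one-dimensional limiting integral with no arbitrary choice of line direction; what it costs is that the level-set Kac--Rice formula is being applied to the non-Gaussian field $|p_n|^2$, so a short nondegeneracy justification (via the co-area formula for the Gaussian vector $(p_n,p_n')$) is needed, which the Crofton route sidesteps. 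Also note that conditioned on $|p_n(z)|=1$ the derivative $p_n'(z)$ is a circular mixture of Gaussians rather than a single Gaussian; this does not affect $\EE|p_n'(z)|$, but the statement should be phrased accordingly. The one substantive point to tighten is the interchange of limit and integral: your three regions $r<1-\varepsilon$, $r>1+\varepsilon$, and $|r-1|\le c/n$ do not cover the annuli $c/n<|r-1|<\varepsilon$, and an $O(n^{-1/2})$ estimate on a window of width $O(1/n)$ does not by itself control the full band $|r-1|\le\varepsilon$. The clean fix is the one the paper uses: the bounds $|\Sigma|/a^3 \le (1-|z|^2)^{-1}$ and $|b|/a^2 \le |z|$ for $|z|<1$, and $|\Sigma|/a^3 \le (|z|^2-1)^{-1}$ for $1<|z|<2$, hold for every $n$, and together with $\rho(\nu)\le \nu + \tfrac{\sqrt{\pi}}{2}$ they yield an $n$-independent integrable dominating function on the whole disk and on the annulus $1<|z|<2$, so dominated convergence applies globally and no shrinking transition window is required.
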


\subsection{The Erd\"os lemniscate is an outlier}\label{sec:outlier}

The following Corollary of Theorem \ref{thm:Kac}
provides weak concentration of measure around lemniscates
having length of constant order.
 \begin{cor}\label{cor:outlier}
Let $L_n$ be any sequence with $L_n \rightarrow \infty$ as $n \rightarrow \infty$.
The probability that $|\Lambda_n| \geq L_n$ converges to zero.
 \end{cor}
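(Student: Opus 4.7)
The plan is to derive this as an essentially immediate consequence of Theorem \ref{thm:Kac} via Markov's inequality. All of the substantive analytic work — computing the limiting expected length — has already been done, so the corollary reduces to a standard first-moment tail bound.

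First I would invoke Theorem \ref{thm:Kac}: since $\EE|\Lambda_n|$ converges to the finite constant $C$, the sequence $(\EE|\Lambda_n|)_{n\geq 1}$ is bounded, say by some $M<\infty$. The arc length $|\Lambda_n|$ is a nonnegative random variable (a measurable functional of the Gaussian coefficient vector $(a_0,\ldots,a_n)$, for instance via the coarea-formula representation underlying the proof of Theorem \ref{thm:Kac}), so Markov's inequality gives
$$\P\bigl(|\Lambda_n|\geq L_n\bigr) \;\leq\; \frac{\EE|\Lambda_n|}{L_n} \;\leq\; \frac{M}{L_n},$$
and the right-hand side tends to $0$ since $L_n\to\infty$. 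This is the desired conclusion.

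There is no real obstacle; the only mild care needed is to ensure the uniform bound $M$ exists, which follows from convergence of $\EE|\Lambda_n|$. Conceptually, the content of the corollary is the contrast with the Erd\H{o}s lemniscate: since the extremal lemniscate has length $\sim 2n\to\infty$, any sequence $L_n\to\infty$ (in particular $L_n$ growing arbitrarily slowly) already suffices to make the event $\{|\Lambda_n|\geq L_n\}$ asymptotically negligible, so the extremal configuration is seen with vanishing probability under the Kac model.
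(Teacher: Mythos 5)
Your proposal is correct and is exactly the paper's argument: apply Markov's inequality to the nonnegative random variable $|\Lambda_n|$ and use the boundedness of $\EE|\Lambda_n|$ guaranteed by Theorem \ref{thm:Kac}. Nothing further is needed.
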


\begin{proof}
Since the length $|\Lambda_n|$ is a positive random variable, 
we can apply Markov's inequality:
$$ \P \{ |\Lambda_n| \geq L_n \} \leq \frac{\EE |\Lambda_n|}{L_n} =O(L_n^{-1}), \quad \text{ as } n \rightarrow \infty ,$$
by Theorem \ref{thm:Kac}.
\end{proof}

In particular, the probability that the length has the same order as the extremal case
(i.e., exceeding some fixed portion of $n$) converges to zero.

\subsection{The connected components of a random lemniscate}
How many connected components does a random lemniscate have? 
This question was addressed in \cite{Lemni} 
in the setting of rational lemniscates. 
The next theorem answers this question for a random polynomial lemniscate based
on the Kac model.  The notation $b_0(\Lambda_n)$ denotes the zeroth Betti number,
which is the number of connected components.

\begin{thm}\label{thm:cc}
The number $b_0(\Lambda_n)$ of connected components of a random
Kac lemniscate satisfies
$$\E b_0(\Lambda_n) \sim n, \quad \text{as } n \rightarrow \infty.$$
\end{thm}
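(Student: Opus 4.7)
My plan combines a deterministic Morse-theoretic identity with a Kac--Rice computation. Consider the smooth subharmonic function $f_n(z)=|p_n(z)|^2$: its critical points are the $n$ zeros of $p_n$ (local minima with value $0$) and, generically, $n-1$ non-degenerate saddles at the remaining zeros of $p_n'$, with no local maxima since $\Delta f_n = 4|p_n'|^2 \ge 0$. Sweeping the sublevel set $\{f_n \le t\}$ from $t = 0^+$ to $\infty$, we start with $n$ disjoint small disks around the zeros, and at each saddle crossing Morse theory dictates that either two components merge ($\Delta b_0 = -1$) or a handle is attached; the latter would open a bounded component of $\{f_n > t\}$, which is forbidden by the maximum modulus principle applied to the polynomial $p_n$. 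Hence every saddle traversal is a merger, and setting $t = 1$ yields, almost surely, the identity
\be
b_0(\Lambda_n) \;=\; n \;-\; N_n, \qquad N_n \;:=\; \#\bigl\{w \in \CC \,:\, p_n'(w) = 0,\ |p_n(w)| < 1\bigr\}.
\ee

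\textbf{Reduction via Kac--Rice.} Taking expectations reduces the theorem to the claim $\E N_n = o(n)$, that is, a typical critical point of $p_n$ has $|p_n|>1$. Applying the Kac--Rice formula to the Gaussian analytic function $p_n'$ gives
\be
\E N_n \;=\; \int_{\CC} \frac{\E\bigl[\,|p_n''(w)|^2\,\mathbf{1}_{\{|p_n(w)| < 1\}}\,\bigm|\,p_n'(w) = 0\,\bigr]}{\pi\,\E|p_n'(w)|^2}\,dA(w).
\ee
Dropping the indicator recovers the unconditional expected density of critical points of $p_n$, whose total integral equals $n-1$; the task is therefore to show that the indicator shaves the integrand down by a factor $o(1)$ in aggregate.

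\textbf{Gaussian estimates.} The triple $(p_n(w), p_n'(w), p_n''(w))$ is a centered complex Gaussian vector whose $3 \times 3$ covariance matrix has explicit entries built from sums such as $\sum_k |w|^{2k}$, $\sum_k k^2 |w|^{2(k-1)}$, and $\sum_k k^2(k-1)^2 |w|^{2(k-2)}$. A direct asymptotic computation shows that on the bulk of the support of the critical-point density (a thin annulus around $|w|=1$), the conditional variance of $p_n(w)$ given $p_n'(w) = 0$ is of order $n$, so that $\P(|p_n(w)| < 1 \mid p_n'(w) = 0) = O(1/n)$. To decouple the $|p_n''(w)|^2$ factor from the indicator I would regress $p_n''$ on $p_n$ in the conditional law $\{p_n'(w) = 0\}$, writing $p_n'' = \lambda(w)\,p_n + \eta$ with $\eta$ independent of $p_n$ there; the dominant piece $\E|\eta|^2 \cdot \P(|p_n| < 1 \mid p_n' = 0)$ then gives an $O(1)$ bound per unit critical-point density, which upon integration over the critical annulus yields $\E N_n = O(1)$, comfortably $o(n)$.

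\textbf{Main obstacle.} The delicate step is the uniform control of the conditional second moment $\E[\,|p_n''|^2\,\mathbf{1}_{|p_n|<1}\mid p_n'=0\,]$ for $|w|$ close to $1$, where the Gaussian triple $(p_n, p_n', p_n'')$ becomes nearly degenerate (the covariance matrix has condition number growing polynomially in $n$) and a crude Cauchy--Schwarz would lose precisely the $1/n$ saving we need; the regression decomposition above is designed to absorb this degeneracy. The complementary region $\bigl|\,|w|-1\,\bigr| \gtrsim (\log n)/n$ carries only $o(n)$ of the total critical mass and is handled by standard Kac--Rice tail bounds, so the integrand restricted there contributes negligibly.
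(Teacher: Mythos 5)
Your starting point is genuinely different from the paper's, and the deterministic part of it is correct: for a generic polynomial (simple zeros of $p_n$ and of $p_n'$, no critical point on $\Lambda_n$ --- all almost sure events here), sweeping the sublevel sets of $|p_n|^2$ and using the maximum principle to rule out handle attachments does give the exact identity $b_0(\Lambda_n)=n-N_n$, where $N_n$ counts critical points of $p_n$ inside $\{|p_n|<1\}$; this is the same mechanism used in the cited work on rational lemniscates \cite{Lemni}, and it subsumes the upper bound $b_0\le n$ since $N_n\ge 0$. The paper instead proves only the lower bound $\E b_0\ge n-o(n)$ directly, by exhibiting at each zero $\zeta$ in a thin annulus a certificate ($|p'(\zeta)|>2n^{1+\alpha}$ together with $|p^{(k)}(\zeta)|<n^{k+1/2+\beta}$ for $k\ge2$) forcing a tiny component, and counting certified zeros with a weighted Kac--Rice formula. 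The trade-off is clear: the paper's route needs only one-sided, rather crude large-deviation bounds on the derivatives conditioned on $p(\zeta)=0$ (its Lemmas \ref{lemma:conditionalGaussian} and \ref{lemma:overwhelming}), at the cost of a non-quantitative $o(n)$; your route, if completed, would yield a rate, namely $n-\E b_0(\Lambda_n)=O(\log n)$ or better.

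The gap is that the entire quantitative content of your argument --- the bound $\E N_n=o(n)$ --- is asserted rather than proved, and the assertion is not uniform where it needs to be. Two specific points. First, the decoupling step: after regressing $p_n''$ on $p_n$ in the law conditioned on $p_n'(w)=0$, you must actually verify that $|\lambda(w)|^2\,\E\bigl[|p_n|^2\mathbf{1}_{|p_n|<1}\bigr]$ is dominated by $\E|\eta|^2\cdot\P(|p_n|<1\mid p_n'=0)$; near $|w|=1$ one has $\lambda\asymp n^2$ and conditional variance of $p_n$ of order $n$, so the two terms are of genuinely different orders ($n^3$ versus $n^4/n\cdot n$) and the computation happens to close, but this must be checked uniformly on the annulus, where the covariance matrix of $(p_n,p_n',p_n'')$ degenerates. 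Second, your dichotomy between the width-$(\log n)/n$ annulus and its complement does not cleanly cover the intermediate scales: for $1-|w|\asymp s/n$ with $1\ll s\ll n$ the conditional variance of $p_n(w)$ given $p_n'(w)=0$ drops to order $n/s$, so the per-point saving is only $O(s/n)$ while the expected number of critical points at that scale is of order $n/s$; each dyadic scale then contributes $O(1)$ and the honest total is $O(\log n)$, not the $O(1)$ you claim (which also makes the final bound still $o(n)$, but the bookkeeping has to be done). None of these steps is likely to fail, but as written they are a program, not a proof; carrying them out would require conditional-Gaussian estimates of the same flavor as, and somewhat sharper than, those in the paper's Lemma \ref{lemma:overwhelming}.
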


Along with Theorem \ref{thm:Kac},
this indicates a prevalence of small components.
In fact,
the idea of the proof of Theorem \ref{thm:cc}
is to check in the vicinity of a zero
for a component to appear
within a disk of radius $n^{-1-\alpha}$
where $0<\alpha<1/2$.
This suggests that relatively few
components account for most of the length.
It seems natural to further investigate the distribution of lengths of components,
and we begin to do this with the next Thereom that establishes,
with some positive probability independent of n, 
the presence of at least one ``giant component'' 
(compare with the samples plotted in Figure 2).
 
 \begin{thm}\label{thm:giant}
 Fix $r\in (0,1)$ and let $\Lambda_n$ be a random Kac lemniscate. There is a positive probability (depending on $r$ but independent of $n$) that $\Lambda_n$ has a component with length at least $2\pi r$.
 \end{thm}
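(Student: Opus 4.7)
The strategy is to produce, with positive probability independent of $n$, a single component of $\Lambda_n$ that encloses the disk $\{|z|\le r\}$. Once such a component $\gamma$ is exhibited, it must lie in $\{|z|>r\}$ and wind once around the origin, so parametrizing $z(t)=\rho(t)e^{i\theta(t)}$ with $\rho(t)\ge r$ gives $|z'(t)|\ge\rho(t)|\theta'(t)|\ge r|\theta'(t)|$, whence $|\gamma|=\int|z'|\,dt\ge r\int|\theta'|\,dt\ge 2\pi r$.

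The heart of the proof is therefore the $n$-uniform small-ball estimate
\[
\P\Bigl(\max_{|z|=r}|p_n(z)|<1\Bigr)\ \ge\ c(r)\ >\ 0.
\]
I would derive it directly from the bound $\max_{|z|=r}|p_n(z)|\le\sum_{k=0}^{n}|a_k|r^k$ and the independence of the coefficients. Choosing for instance $c_k=\tfrac{1}{2}(1-\sqrt{r})\,r^{-k/2}$ makes $\sum_{k\ge 0}c_kr^k=\tfrac{1}{2}$, while $\sum_k e^{-c_k^2}$ converges very rapidly. Since $\P(|a_k|<c_k)=1-e^{-c_k^2}$ for $a_k\sim N_{\CC}(0,1)$, the infinite product $\prod_{k\ge 0}(1-e^{-c_k^2})$ is strictly positive, and on the intersection event $\bigcap_{k\ge 0}\{|a_k|<c_k\}$ every $p_n$ satisfies $\max_{|z|=r}|p_n(z)|<\tfrac{1}{2}$.

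On this event the maximum modulus principle forces $|p_n|<1$ throughout $\{|z|\le r\}$, and since $|p_n(z)|\to\infty$ as $|z|\to\infty$, the connected component $U$ of $\{|p_n|<1\}$ containing this disk is bounded. Almost surely $1$ is a regular value of $|p_n|$ (the finitely many critical values of $|p_n|^2$ have an absolutely continuous joint distribution and hence avoid $\{1\}$ a.s.), so $\Lambda_n$ is a finite disjoint union of smooth Jordan curves. The boundary of the unbounded component of $\CC\setminus\overline{U}$ is then a single such Jordan curve $\gamma\subset\Lambda_n$ enclosing $\{|z|\le r\}$, and the length bound from the first paragraph completes the proof.

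I expect the small-ball estimate to be the only genuinely nontrivial step; the topological setup and the length bound are essentially standard. The crucial feature being exploited is the Kac ensemble's coefficient independence, which lets the small-ball probability factor into a convergent infinite product and thereby delivers a lower bound uniform in $n$---for a correlated ensemble this uniformity would be substantially more delicate.
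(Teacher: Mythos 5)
Your proposal is correct and follows essentially the same route as the paper: both rest on the $n$-uniform small-ball estimate $\P\bigl(\sup_{|z|=r}|p_n|<1\bigr)\ge c(r)>0$, obtained from the independence of the Gaussian coefficients (the paper bounds the tail of $\sum_k|a_k|r^k$ via its finite expectation and handles the first $N$ coefficients separately, while you use the explicit convergent product $\prod_k\bigl(1-e^{-c_k^2}\bigr)$), and then conclude that the component of $\{|p_n|<1\}$ containing $B(0,r)$ forces a component of $\Lambda_n$ of length at least $2\pi r$. The only difference is in that last one-line step --- the paper invokes the isoperimetric inequality, whereas you bound the length of the enclosing Jordan curve by its angular variation --- and both are valid.
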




 
\subsection{Remarks}

The Erd\"os lemniscate is 
extremely singular and symmetric (see Figure \ref{fig:ErdosLemni}),
and its length appears to diminish rapidly under perturbations.
Naively, this suggests that
it occupies a rather far corner of the parameter space.
The probabilistic approach taken here provides a framework 
for making this notion precise
as we have done in Section \ref{sec:outlier}.
The authors expect that the rate of decay in Corollary \ref{cor:outlier} 
can be improved,
and it would be interesting to investigate this topic
from the point of view of large deviations.

The outcome for the average length of a random lemniscate
depends on the definition of ``random''.
The Kac ensemble is one of the most well-studied instances,
and it seems especially appropriate 
in the context of the Erd\"os lemniscate problem,
since the zeros of $p_n$ resemble those of the defining polynomial of the Erd\"os lemniscate
in that they are approximately equidistributed on the unit circle \cite{SZ, ZZ}.
We consider several models in the sections below,
including the case that the variances have binomial coefficient weights
and also the case in which they have reciprocal binomial coefficient weights.
In each of these cases, the expected length has order $O(n^{-1/2})$.

Another extremal problem, to find the maximal spherical length of a rational lemniscate,
was posed and solved by Eremenko and Hayman \cite{EremHay}. 
Lerario and the first author
considered random rational lemniscates on the 
Riemann sphere \cite{Lemni}
and computed the average spherical length.
They also studied the connected components
while giving special attention to
nesting of components,
which can occur for rational lemniscates,
but is not possible 
for polynomial lemniscates
(the latter statement follows from the maximum principle).

\subsection{Outline of the paper}
Theorem \ref{thm:Kac} will follow from a more general result
proved in Section \ref{sec:general},
namely, Theorem \ref{thm:general} 
provides the expected length while allowing 
the coefficients appearing in \eqref{eq:KacModel}
to be independent centered Gaussians with different variances.
The methods in proving Theorem \ref{thm:general} 
are based on planar integral geometry
combined with the Kac-Rice formula.
In Section \ref{sec:Kac}, 
we then derive Theorem \ref{thm:Kac} as a consequence
of Theorem \ref{thm:general}.
We also apply Theorem \ref{thm:general} to three other models:
lemniscates generated by Kostlan polynomials are treated in Section \ref{sec:Kostlan},
Weyl polynomials in Section \ref{sec:Weyl},
and a model that we call the ``reciprocal binomial'' 
model is considered in Section \ref{sec:reciprocal}.
Returning to the Kac model in Section \ref{sec:components},
we study the connected components of a random lemniscate;
we prove Theorem \ref{thm:cc} in Section \ref{sec:cc}
and Theorem \ref{thm:giant} in Section \ref{sec:giant}.


\section{A length formula for Gaussian polynomials}\label{sec:general}

In this section we assume that the 
coefficients appearing in $p_n(z)$ 
are centered, independent, but not necessarily identically distributed complex Gaussians.

\subsection{Length and integral geometry}

Applying the integral geometry formula as in \cite{EremHay}, we have:
$$|\Lambda_n | = \frac{1}{2} \int_0^\pi \int_{-\infty}^\infty N_n(\theta,y) d \theta dy ,$$
where $N_n(\theta,y)$ is the number of intersections of $\Lambda_n$ with the line $L(\theta,y) := \{z \in \CC: \Im (e^{-i\theta}z) = y \}$.
Taking the expectation of both sides and using the rotational invariance of $\Lambda_n$, we have:
\begin{equation}\label{eq:IGF}
\EE |\Lambda_n | = \frac{1}{2} \int_0^\pi \int_{-\infty}^\infty \EE N_n(\theta,y) d \theta dy = \frac{\pi}{2} \int_{-\infty}^\infty \EE N_n(0,y) dy.
\end{equation}

\subsection{The Kac-Rice formula}
We use the Kac-Rice formula to compute $\EE N_n(0,y)$ 
which equals the average number of real zeros of the function
$$ p_n(z) \ol{p_n(z)} - 1,$$
restricted to the line $L(0,y)$.
We have:
$$ \frac{\partial}{\partial x} ( p_n(z) \ol{p_n(z)} - 1 ) = p_n'(z) \ol{p_n(z)} + p_n(z) \ol{p_n'(z)} .$$ 

Applying the Kac-Rice formula, we have:
\begin{equation}\label{eq:KR}
\EE N_n(0,y) = \int_{-\infty}^\infty \EE \delta(|p_n(z)|^2-1) |p_n'(z) \ol{p_n(z)} + p_n(z) \ol{p_n'(z)} |  dx.
\end{equation}

For the sake of notational clarity we will henceforth suppress the dependence on $n.$ So for instance $\Lambda_n$ will be denoted by $\Lambda,$  $p_n$ by $p$ etc. We can rewrite \eqref{eq:KR} in terms of the Gaussian random complex vector 
$(U,V) = (p(z),p'(z))$ whose joint probability density function is:
\begin{equation}\label{eq:density}
\rho(u,v;x+iy) = \frac{1}{\pi^2 |\Sigma|} \exp \{- (u,v)^* \Sigma^{-1} (u,v) \},
\end{equation}
where $\Sigma$ is the covariance matrix of $(U,V) = (p(z),p'(z))$,
which can be computed explicitly using the covariance kernel $K(z,w)$:
$$K(z,w) = \EE p(z) \ol{p(w)}.$$
Namely, we have:
$$ \Sigma = \left( \begin{array}{cr}
a & b \\ \bar{b} & c 
\end{array} \right) ,$$
where
\begin{equation}\label{eq:abc}
a = K(z,z), b = \partial_{z} K(z,z)\hspace{0.05in}\mbox{and,}\hspace{0.05in} c  = \partial_{z} \partial_{\bar{z}} K(z,z).
\end{equation}

In terms of this joint density, the expectation inside \eqref{eq:KR}
can be expressed as:
\begin{align*}
 \EE \delta(|p(z)|^2-1) |p'(z) \ol{p(z)} + p(z) \ol{p'(z)} |  &= \int_{\C} \int_{\C} \delta(|u|^2-1) |v\bar{u} + u \bar{v}| \rho(u,v;x+iy) dA(v) dA(u) \\
 &= \int_{|u|=1} \int_{\C} \frac{1}{2|u|}  |v\bar{u} + u \bar{v}| \rho(u,v;x+iy) dA(v) dA(u)  \\
 &= \frac{1}{2} \int_{|u|=1} \int_{\C} |v\bar{u} + u \bar{v}| \rho(u,v;x+iy) dA(v) dA(u),
 \end{align*}
 where we have used the composition property of the $\delta$-function
 (\cite{Hor}, Chapter $6$) allowing integration against $\delta(|u|^2-1)$
 to be replaced by an integration along the set $|u|^2=1$.
 
For $|u|=1$, we notice that 
\begin{align*}
\rho(u,v;z) &= \frac{1}{\pi^2 |\Sigma|} \exp \{- u\bar{u}(1,\bar{u}v)^* \Sigma^{-1} (1,\bar{u}v) \} \\
&= \frac{1}{\pi^2 |\Sigma|} \exp \{- (1,\bar{u}v)^* \Sigma^{-1} (1,\bar{u}v) \} \\
&= \rho(1,\bar{u}v;z).
\end{align*}
Making the change of variables $t = \bar{u}v$, $dA(t) = dA(v)$,
the integral above becomes
 \begin{equation}
 \frac{1}{2} \int_{|u|=1} \int_{\C} |t + \bar{t}| \rho(1,t;x+iy) dA(t) du = \pi \int_{\C} |t + \bar{t}| \rho(1,t;x+iy) dA(t) . 
\end{equation}

Thus, we have:
$$\EE N(0,y) = 2\pi \int_{-\infty}^\infty \int_{\C} |\Re\{t\}| \rho(1,t;x+iy) dA(t) dx.$$

Inserting this into the integral geometry formula \eqref{eq:IGF} gives:
\begin{equation}\label{eq:reduced}
\EE |\Lambda | =  \pi^2 \int_{\C} \int_{\C} |\Re \{t\} | \rho(1,t;z) dA(t) dA(z).
\end{equation}

Observe that the density $\rho$ can be factored:
\begin{align*}
\rho(1,t;z) &= 
\frac{\exp \{-\frac{1}{a} \}}{\pi a} \frac{a}{\pi |\Sigma|} \exp \left\{-\frac{a}{|\Sigma|} \left| t-\frac{b}{a} \right|^2 \right\} \\
&= \frac{\exp \{-\frac{1}{a} \}}{\pi a}  \hat{\rho}(t),
\end{align*}
where $\hat{\rho}$ is the probability density function for a complex Gaussian
$N_\C(\mu,\sigma^2)$ with mean $\mu = b/a$ and variance $\sigma^2 = \frac{|\Sigma|}{a}$.
Thus, the following lemma applies.

\begin{lemma}\label{lemma:absreal}
Let $\zeta \sim N_\C(\mu,\sigma^2)$
be a complex Gaussian with mean $\mu = \mu_1 + i \mu_2$.
Then the absolute moment $\EE |\zeta_1|$ of the real part of $\zeta = \zeta_1 + i \zeta_2$ is given by
$$\EE |\zeta_1| = \frac{\sigma}{\sqrt{\pi}} \exp\{-\mu_1^2/\sigma^2\} + |\mu_1| \erf(|\mu_1|/\sigma) .$$
\end{lemma}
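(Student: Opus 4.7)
The plan is to reduce the computation to an absolute moment of a real Gaussian and then perform the elementary integration. Since the density of $\zeta \sim N_\CC(\mu,\sigma^2)$ is $\frac{1}{\pi\sigma^2} \exp\{-|z-\mu|^2/\sigma^2\}$, which factors over real and imaginary parts as
$$\frac{1}{\sqrt{\pi}\,\sigma} e^{-(x-\mu_1)^2/\sigma^2} \cdot \frac{1}{\sqrt{\pi}\,\sigma} e^{-(y-\mu_2)^2/\sigma^2},$$
the real part $\zeta_1$ is a real Gaussian $N(\mu_1, \sigma^2/2)$, independent of $\zeta_2$. So it suffices to compute $\EE|X|$ for a general real Gaussian $X \sim N(m, s^2)$, and then specialize to $m = \mu_1$, $s = \sigma/\sqrt{2}$.

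For the real computation, I would write
$$\EE|X| = \int_{-\infty}^{\infty} |x|\, \frac{1}{s\sqrt{2\pi}} e^{-(x-m)^2/(2s^2)}\, dx = \int_0^\infty x\, \phi_{m,s}(x)\, dx - \int_{-\infty}^0 x\, \phi_{m,s}(x)\, dx,$$
where $\phi_{m,s}$ is the Gaussian density. Each piece splits as $\int x\,\phi_{m,s}(x)\,dx = \int (x-m)\,\phi_{m,s}(x)\,dx + m\int \phi_{m,s}(x)\,dx$. The first summand is an elementary antiderivative of the form $-s^2 \phi_{m,s}(x)$, which at the endpoint $x=0$ produces $s^2\phi_{m,s}(0) = \frac{s}{\sqrt{2\pi}} e^{-m^2/(2s^2)}$ (contributing twice, once from each piece, giving $s\sqrt{2/\pi}\,e^{-m^2/(2s^2)}$). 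The second summand involves tail probabilities $\int_0^\infty \phi_{m,s}$ and $\int_{-\infty}^0 \phi_{m,s}$, whose difference is $\erf(m/(s\sqrt{2}))$, contributing $m\,\erf(m/(s\sqrt{2}))$. Since this quantity equals $|m|\,\erf(|m|/(s\sqrt{2}))$ for either sign of $m$, I arrive at
$$\EE|X| = s\sqrt{\tfrac{2}{\pi}}\, e^{-m^2/(2s^2)} + |m|\,\erf(|m|/(s\sqrt{2})).$$

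Finally, substituting $m = \mu_1$ and $s = \sigma/\sqrt{2}$ gives $s\sqrt{2/\pi} = \sigma/\sqrt{\pi}$, $m^2/(2s^2) = \mu_1^2/\sigma^2$, and $|m|/(s\sqrt{2}) = |\mu_1|/\sigma$, which yields exactly the claimed formula. There is no real obstacle here; the only thing to be careful about is matching the complex Gaussian variance convention (real and imaginary parts each have variance $\sigma^2/2$, not $\sigma^2$), which is precisely what makes the factor $\sqrt{2}$ disappear from the final answer.
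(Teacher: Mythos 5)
Your proof is correct and arrives at the stated formula by essentially the same elementary computation as the paper: the paper rescales via $w=(\zeta-\mu)/\sigma$ and evaluates the resulting planar Gaussian integral directly using an odd/even splitting, while you first marginalize to the real part (correctly noting it is $N(\mu_1,\sigma^2/2)$ under the paper's convention for $N_\CC(\mu,\sigma^2)$) and then invoke the standard folded-normal mean. Both routes are the same integral organized slightly differently, and your identification of the variance convention as the only delicate point is exactly right.
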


\begin{proof}[Proof of Lemma \ref{lemma:absreal}]
We have
 \begin{align*}
 \EE |\zeta_1|  &= \frac{1}{\pi \sigma^2} \int_{\C} |\zeta_1| \exp\left\{\frac{-|\zeta-\mu|^2}{\sigma^2}\right\} dA(\zeta) \\
  &=  \frac{1}{\pi} \int_{\CC} |\sigma w_1 + \mu_1| \exp\left\{ -|w|^2 \right\} dA(w),
 \end{align*}
where we have made the change of variables $w = \frac{\zeta-\mu}{\sigma}$, $dA(w) = \frac{1}{\sigma^2}dA(\zeta)$.

Letting $H:=\{w \in \C:\sigma w_1 + \mu_1 > 0 \}$, we can rewrite the above integral as:
$$ \frac{1}{\pi}\left( \int_{H} (\sigma w_1 + \mu_1) \exp\{ -|w|^2 \} dw_1 dw_2 - \int_{\C \setminus H} (\sigma w_1 + \mu_1) \exp\{ -|w|^2 \} dw_1 dw_2 \right).$$
Since $\sigma w_1$ is odd and $\mu_1$ is even (with respect to $w_1$) this can be rewritten as:
\begin{equation}\label{eq:first}
\frac{1}{\pi}\left( \int_{R} |\mu_1| \exp\{ -|w|^2 \} dw_1 dw_2 + \sigma \int_{\C \setminus R} |w_1|  \exp\{ -|w|^2 \} dw_1 dw_2 \right),
\end{equation}
where $R:=\left\{w \in \C: |w_1| < \frac{|\mu_1|}{\sigma}  \right\}$.
The first integral can be computed in terms of the error function, $\erf$:
\begin{equation}\label{eq:second}
\int_{R} |\mu_1| \exp\{ -|w|^2 \} dw_1 dw_2 = \pi |\mu_1| \erf(|\mu_1|/\sigma),
\end{equation}
and the second integral is elementary:
\begin{equation}\label{eq:third}
\int_{\C \setminus R} |w_1|  \exp\{ -|w|^2 \} dw_1 dw_2 = \sqrt{\pi}\exp\{-\mu_1^2/\sigma^2\}.
\end{equation}
Collecting \eqref{eq:first}, \eqref{eq:second}, and \eqref{eq:third}, 
we arrive at the formula stated in the lemma. 
\end{proof}

Applying Lemma \ref{lemma:absreal} to \eqref{eq:reduced}, we obtain the following
main result of this section:
\begin{thm}\label{thm:general}
Let $p(z)$ be a random polynomial whose coefficients are independent centered Complex Gaussians. 
Then the expected length of its lemniscate $\Lambda := \{z \in \CC : |p(z)| = 1 \}$ is given by
\begin{equation}\label{eq:nonasymp}
\EE |\Lambda | =  \sqrt{\pi} \int_{\C} \frac{\exp \{-\frac{1}{a} \}}{a} \left[ \sqrt{\frac{|\Sigma|}{a}}\exp\left\{-\frac{|\Re{b}|^2}{a|\Sigma|} \right\} + \sqrt{\pi} \frac{|\Re b|}{a} \erf \left\{ |\Re b|/\sqrt{a |\Sigma|} \right\} \right] dA(z).
\end{equation}
where as above $|\Sigma|$ denotes the determinant of the covariance matrix
$\Sigma$ and, the terms $a, b, c$ are
the entries of $\Sigma$ given by \eqref{eq:abc}.
\end{thm}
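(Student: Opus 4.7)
The plan is to assemble the three ingredients that Section~\ref{sec:general} has already prepared: the reduced double-integral representation \eqref{eq:reduced}, the Gaussian factorization of $\rho(1,t;z)$ displayed just before Lemma~\ref{lemma:absreal}, and Lemma~\ref{lemma:absreal} itself. The heavy lifting (integral geometry, Kac--Rice, and the $t=\bar u v$ reduction) has already been carried out, so what remains is an algebraic assembly.

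My first step is to start from \eqref{eq:reduced} and, for each fixed $z$, pull the $z$-dependent scalar $\frac{\exp\{-1/a\}}{\pi a}$ out of the inner integral. What remains inside is $\int_{\C} |\Re t|\,\hat{\rho}(t)\,dA(t)$, which is by construction the absolute first moment $\E|\Re\zeta|$ of the real part of $\zeta \sim N_\C(b/a,\,|\Sigma|/a)$.

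Next I would apply Lemma~\ref{lemma:absreal} with $\mu = b/a$ and $\sigma^{2} = |\Sigma|/a$. Because $a = K(z,z) = \E|p(z)|^{2}$ is real and positive and $|\Sigma| = ac-|b|^{2}$ is real (Hermitian determinant), we have $\mu_{1} = \Re(b)/a$, and a short simplification gives $\mu_{1}/\sigma = |\Re b|/\sqrt{a|\Sigma|}$ and $\mu_{1}^{2}/\sigma^{2} = |\Re b|^{2}/(a|\Sigma|)$. Multiplying the resulting expression for $\E|\Re\zeta|$ by the prefactor $\pi^{2}\cdot\frac{\exp\{-1/a\}}{\pi a} = \frac{\pi\exp\{-1/a\}}{a}$ turns the two separate powers of $\pi$ into a single outer $\sqrt{\pi}$ multiplying the bracketed sum in the theorem, and the $\sqrt{\pi}$ in front of the second term inside the bracket. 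Reintroducing the outer integral $\int_{\C}\cdots\,dA(z)$ then yields \eqref{eq:nonasymp} exactly as stated.

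The only genuinely non-trivial piece of bookkeeping is the factorization of $\rho(1,t;z)$ itself, which I would verify by expanding the quadratic form $(1,t)^{*}\Sigma^{-1}(1,t)$ using the explicit inverse of the $2\times 2$ Hermitian matrix $\Sigma$ and completing the square in $t$. This produces a $t$-independent contribution $1/a$ (yielding the prefactor $\exp\{-1/a\}/(\pi a)$) and a remainder $\frac{a}{|\Sigma|}|t-b/a|^{2}$, which when normalized by $a/(\pi|\Sigma|)$ is precisely the density $\hat\rho$ of $N_\C(b/a,\,|\Sigma|/a)$. Once this identification is checked, no further analytic obstacle remains; the theorem is essentially a repackaging of Lemma~\ref{lemma:absreal} composed with \eqref{eq:reduced}.
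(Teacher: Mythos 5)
Your proposal is correct and follows exactly the route the paper itself takes: starting from \eqref{eq:reduced}, factoring $\rho(1,t;z)$ as $\frac{\exp\{-1/a\}}{\pi a}\hat\rho(t)$ with $\hat\rho$ the density of $N_\CC(b/a,|\Sigma|/a)$, and applying Lemma~\ref{lemma:absreal} with $\mu_1=\Re(b)/a$ and $\sigma^2=|\Sigma|/a$; your bookkeeping of the powers of $\pi$ matches \eqref{eq:nonasymp}. There is nothing to add.
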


\section{Kac polynomials: proof of Theorem \ref{thm:Kac}}\label{sec:Kac}

In the case $p(z)$ is a random Kac polynomial, for the entries in the covariance matrix,
$$ \Sigma = \left( \begin{array}{cr}
a & b \\ \bar{b} & c 
\end{array} \right) ,$$
we have
\begin{align*}
a &= K(z,z) = \sum_{k=0}^n |z|^{2k},\\
b &= \partial_{z} K(z,z) = \bar{z} \sum_{k=1}^n k |z|^{2k-2},\\
c &= \partial_{z} \partial_{\bar{z}} K(z,z) = \sum_{k=1}^n k^2 |z|^{2k-2}.
\end{align*}

We will show that the pointwise limit of the integrand appearing in \eqref{eq:nonasymp} as $n\to\infty$ is:
\begin{equation}\label{eq:pointwise}
\left\{ \begin{array}{cc}
  \exp\left\{ -(1-|z|^2) \right\}\left[\frac{\exp\left\{-x^2(1-|z|^2)\right\} }{(1-|z|^2)^{1/2}} + \sqrt{\pi} x\erf\left\{x\sqrt{1-|z|^2}\right\} \right], \quad |z| < 1, \quad   \\
 0 , \quad |z| \geq 1.
\end{array} \right.
\end{equation}

We will also show that the dominated convergence theorem applies,
so that the integral in Theorem \ref{thm:general}
has a limit as $n \rightarrow \infty$
given by the integral of \eqref{eq:pointwise}.
After changing to polar coordinates, this becomes:
\begin{align}\label{eq:limit}
C &:= \lim_{n \rightarrow \infty} \EE |\Lambda | \\
&= \sqrt{\pi} \int_{|z|<1} \exp\left\{ -(1-|z|^2) \right\}\left[ \frac{\exp\left\{-x^2(1-|z|^2)\right\} }{(1-|z|^2)^{1/2}} + \sqrt{\pi} x\erf\left\{x\sqrt{1-|z|^2}\right\} \right] dA(z)\\
&\approx 8.3882,
\end{align}
which proves Theorem \ref{thm:Kac}.  
It remains to compute the pointwise limit and to show dominated convergence.

First, we derive certain formulas from the covariance kernel $K(z,w)$ of the Kac polynomial,
$$K(z,w) = \EE p(z) \ol{p(w)} = \sum_{k=0}^n (z \bar{w})^k = \frac{1-(z \bar{w})^{n+1}}{1-z\bar{w}}.$$
Notice that
\begin{equation}\label{eq:a}
a = K(z,z) = \sum_{k=0}^n |z|^{2k} = \frac{1}{1-|z|^2}- \frac{|z|^{2n+2}}{1-|z|^2}.
\end{equation}

We have
\begin{equation}\label{eq:b}
\frac{\Re \{b\}}{a} = \Re \{ \partial_z \log K(z,z) \}= 
\frac{x}{(1-|z|^2)} - \frac{(n+1) x |z|^{2n}}{1-|z|^{2n+2}},
\end{equation}
and from this we observe that for $|z| < 1,$
\begin{align*}\label{eq:bound2}
\frac{\Re \{b\}}{a^2} &= 
x\frac{\frac{1}{1-|z|^2}-\frac{(n+1)|z|^{2n}}{1-|z|^{2n+2}}}{\frac{1}{1-|z|^2}-\frac{|z|^{2n+2}}{1-|z|^{2}}}\\
&= x\frac{1-(n+1)\frac{|z|^{2n}}{\sum_{k=0}^n|z|^{2k}}}{1-|z|^{2n+2}}\\
&\leq x,
\end{align*}
and as $n\to\infty,$ $\frac{\Re \{b\}}{a^2}\to x.$

On the other hand for $|z| > 1,$ we note that
\begin{align*}
\frac{\Re \{b\}}{a^2} &= x\frac{\frac{(n+1)|z|^{2n}}{|z|^{2n+2}-1}-\frac{1}{|z|^2-1}}{\frac{|z|^{2n+2}}{|z|^2-1}-\frac{1}{|z|^2-1}}\\
& = x\frac{\frac{(n+1)|z|^{2n}}{\sum_{k=0}^n|z|^{2k}}-1}{|z|^{2n+2}-1}\\
&\leq x,
\end{align*}
and as $n\to\infty,$ $\frac{\Re \{b\}}{a^2}\to 0.$ Keeping in mind to apply the dominated convergence theorem for $|z|>1,$ we estimate as follows. 

$$\left|\frac{\Re \{b\}}{a^2}\right|\leq |x|, \hspace{0.1in} 1 < |z| < 2 $$ 

$$\left|\frac{\Re \{b\}}{a^2}\right|\leq|x|\frac{2n}{|z|^{2n+2}}\leq\frac{2}{|z|^3}, \hspace{0.1in} |z| > 2 $$

From
\begin{equation}\label{eq:Sigma}
\frac{|\Sigma|}{a^2} = \partial_{\bar{z}} \partial_z \log K(z,z) = 
\frac{1}{(1-|z|^2)^2}- \frac{(n+1)^2|z|^{2n}}{(1-|z|^{2n+2})^2},
\end{equation}
we notice that for $|z| < 1,$
\begin{align*}
\frac{|\Sigma|}{a^3} &= \frac{\partial_{\bar{z}} \partial_z \log K(z,z)}{K(z,z)} \\
&= \frac{1}{1-|z|^2}\left(\frac{1-\frac{(n+1)^2|z|^{2n}}{(\sum_{k=0}^n |z|^{2k})^2}}{1-|z|^{2n+2}}   \right)\\
&\leq \frac{1}{1-|z|^2},
\end{align*}
\noindent and  $\frac{|\Sigma|}{a^3}\to \frac{1}{1-|z|^2}$ as $n\to\infty.$

\vspace{0.1in}

\noindent A similar computation for $|z| > 1,$ yields

$$ \frac{|\Sigma|}{a^3} = \frac{1}{|z|^2-1}\left(\frac{1-\frac{(n+1)^2|z|^{2n}}{(\sum_{k=0}^n |z|^{2k})^2}}{|z|^{2n+2}-1}\right)\leq \frac{1}{|z|^2-1},$$

\noindent and as $n\to\infty,$ $\frac{|\Sigma|}{a^3}\to 0.$ To apply dominated convergence, we use the following bounds which follow immediately from the above expression

$$\left|\frac{|\Sigma|}{a^3}\right|\leq \frac{1}{|z|^2-1}, \hspace{0.1in} 1 < |z| < 2.$$

$$\left|\frac{|\Sigma|}{a^3}\right|\leq \frac{2}{|z|^{2n+2}}\leq\frac{2}{|z|^{6}}, \hspace{0.1in} |z| > 2, n\geq 2.$$

Letting $F_n(z)$ denote the integrand in \eqref{eq:nonasymp}, we have:
\begin{align*}\label{eq:dominated}
 F_n(z) &= \frac{\exp \{-1/a \}}{a} \left[ \sqrt{\frac{|\Sigma|}{a}}\exp\left\{-\frac{|\Re{b}|^2}{a|\Sigma|} \right\} + \sqrt{\pi} \frac{|\Re b|}{a} \erf \left\{ |\Re b|/\sqrt{a |\Sigma|} \right\} \right] \\
 &\leq \exp\{-1/a \} \left[ \sqrt{\frac{|\Sigma|}{a^3}} + \sqrt{\pi} \frac{|\Re b|}{a^2} \right] .\\
 \end{align*}
For $|z|<1$ we have:
$$F_n(z) \leq \exp\left\{-{(1-|z|^2)} \right\} \left[ \frac{1}{\sqrt{1-|z|^2}} + \sqrt{\pi} x \right] ,$$
which is integrable. If $|z|> 1$ and $n$ is large enough, we split the integral into regions $ 1 < |z| < 2$ and $|z| >2$ and use the appropriate bounds from before. This justifies the use of the dominated convergence theorem.

In order to see the pointwise limit \eqref{eq:pointwise} of $F_n(z)$,
we notice that for $|z| < 1$, we have (as $n \rightarrow \infty$):
$$ \sqrt{\frac{|\Sigma|}{a^3}} \rightarrow \frac{1}{\sqrt{1-|z|^2}} ,$$
$$ a \rightarrow \frac{1}{1-|z|^2},$$
$$ \frac{\Re\{ b \}}{a^2} \rightarrow x,$$
and
$$ \frac{\Re\{ b \}}{\sqrt{a |\Sigma|}} \rightarrow x \sqrt{1-|z|^2}.$$
As pointed earlier, for $|z|> 1$, we have:
$$F_n(z)\rightarrow 0.$$

Combining these pointwise limits, we arrive at \eqref{eq:pointwise},
and applying the dominated convergence theorem proves 
the formula \eqref{eq:limit} for the asymptotic expected length of a lemniscate
generated by the Kac model.


\section{The expected length for other models}

\subsection{Kostlan Polynomials}\label{sec:Kostlan}

\noindent In this section we compare the average length of the lemniscate for different ensembles of random polynomials, starting with the Kostlan ensemble.

\noindent Consider a sequence of random polynomials whose coefficients are Kostlan random variables. Namely

$$P_n(z) = \sum_{k=0}^{n}a_{kn}z^k,$$

\noindent where $a_{kn}$ are independent $N_{\mathbb C}(0, \binom{n}{k}).$ Applying Theorem \ref{thm:general}

\begin{equation}\label{nonasymp2}
\EE |\Lambda | =  \sqrt{\pi} \int_{\C} \frac{\exp \{-\frac{1}{a} \}}{a} \left[ \sqrt{\frac{|\Sigma|}{a}}\exp\left\{-\frac{|\Re{b}|^2}{a|\Sigma|} \right\} + \sqrt{\pi} \frac{|\Re b|}{a} \erf \left\{ |\Re b|/\sqrt{a |\Sigma|} \right\} \right] dA(z).
\end{equation} 

\noindent where now for the Kostlan ensemble,

$$ \Sigma = \left( \begin{array}{cr}
a & b \\ \bar{b} & c 
\end{array} \right) ,$$
with
\begin{align*}
a &= K(z,z) = (1+|z|^2)^{n},\\
b &= n\bar{z}(1+|z|^2)^{n-1} ,\\
c &= n(n|z|^2+1)(1+|z|^2)^{n-2}.
\end{align*}

\noindent This implies that 
\begin{align*}
|\Sigma|& = ac - |b|^2 = n(1+|z|^2)^{2n-2}, \\
\frac{|\Sigma|}{a^3} & = \dfrac{n}{(1+|z|^2)^{n+2}},\\
\frac{\Re \{b\}}{a^2} & = \dfrac{nx}{(1+|z|^2)^{n+1}}, \\
\frac{|\Re b|}{\sqrt{a |\Sigma|}} & = \dfrac{nx^2}{(1+|z|^2)^{n}}.
\end{align*}

\noindent Substituting these expressions into \eqref{nonasymp2}, we obtain

\begin{equation}\label{nonasymp3}
\EE |\Lambda_n | = \sqrt{\pi}\int_{\mathbb{C}}\exp\left(-\frac{1}{(1+|z|^2)^{n}}\right)\left[I_{1n}(z) + I_{2n}(z)\right] dA(z)
\end{equation}

\noindent where $I_{1n}(z) = \sqrt{\frac{n}{(1+|z|^2)^{n+2}}}\exp\left(-\frac{nx^2}{(1+|z|^2)^{n}}\right)$ and $I_{2n}(z)= \sqrt{\pi}\frac{nx}{1+|z|^2} \erf \left\{\sqrt{n}x/(1+|z|^2)^{n/2}\right\}$

\noindent Converting the above integral into polar coordinates $(r, \theta),$ followed by the substitution $r = \sqrt{\frac{t}{n}}$ leads us to

\begin{equation}\label{t}
\EE |\Lambda_n | = \sqrt{\pi}\int_{0}^{2\pi}\int_{0}^{\infty}\exp\left(-\frac{1}{(1+t/n)^{n}}\right)\left[J_{1n}(t, \theta) + J_{2n}(t, \theta)\right]dtd\theta,
\end{equation}

$$J_{1n}(t, \theta)= \sqrt{\frac{1}{n(1+t/n)^{n+2}}}\exp\left(-\frac{t\cos^2(\theta)}{(1+ t/n)^{n}}\right)$$ 

$$J_{2n}(t, \theta) = \sqrt{\pi} \frac{\sqrt{t}\cos(\theta)}{\sqrt{n}(1+t/n)} \erf \left\{\sqrt{t}\cos(\theta)/(1+t/n)^{n/2}\right\}.$$

\noindent Removing a factor of $1/\sqrt{n}$ from the $J_{in}$, we see that the resulting integral has a limit as $n\to\infty.$ Namely, we have the following result

$$\sqrt{n}\EE |\Lambda_n |\to I \hspace{0.1in}\mbox{as}\hspace{0.1in} n\to\infty,$$

where $I$ is the constant given by
$$I = \sqrt{\pi}\int_{0}^{2\pi}\int_{0}^{\infty}\exp\left(-\frac{1}{e^t}\right)\left[\sqrt{\frac{1}{e^t}}\exp\left(-\frac{t\cos^2(\theta)}{e^t}\right)+ \sqrt{\pi}\sqrt{t}\cos(\theta)\erf \left\{\sqrt{t}\cos(\theta)/e^{t/2}\right\}\right]dtd\theta.$$

\subsection{Weyl Polynomials}\label{sec:Weyl}

We now consider Weyl polynomials defined by $P_n(z) = \sum_{k=0}^{n}a_kz^k$ where $a_k$ are independent random variables with $a_k \sim N_{\mathbb{C}}(0, \frac{1}{k!}).$ 

\vspace{0.1in}

\noindent One can check easily that now the covariance matrix has entries given by

$$ \Sigma = \left( \begin{array}{cr}
a & b \\ \bar{b} & c 
\end{array} \right) ,$$

\noindent with

\begin{align*}
a & = \sum_{k=0}^{n}|z|^{2k}/k!,\\
b & = \bar{z}\sum_{k=1}^{n}|z|^{2k-2}/(k-1)! ,\\
c & = \sum_{k=1}^{n}\frac{k^2}{k!}|z|^{2k-2}.
\end{align*}

\noindent Applying Theorem \ref{thm:general}, we obtain

\begin{equation}\label{nonasymp4}
\EE |\Lambda_n | =  \sqrt{\pi} \int_{\C} \frac{\exp \{-\frac{1}{a} \}}{a} \left[ \sqrt{\frac{|\Sigma|}{a}}\exp\left\{-\frac{|\Re{b}|^2}{a|\Sigma|} \right\} + \sqrt{\pi} \frac{|\Re b|}{a} \erf \left\{ |\Re b|/\sqrt{a |\Sigma|} \right\} \right] dA(z).
\end{equation} 

\noindent All the quantities above have finite limits as $n\to\infty.$ For instance $a\to\exp(|z|^2),$ $b\to\bar{z}\exp(|z|^2),$ and $c\to (1+|z|^2)\exp(|z|^2).$ Also,  dominated convergence is easy to verify here. Taking the limit as $n\to\infty$ in \eqref{nonasymp4}, we obtain

$$\EE |\Lambda_n |\to L,$$

where $$L = \sqrt{\pi} \int_{\C} \frac{\exp \{-\frac{1}{e^{|z|^2}} \}}{e^{|z|^2}} \left[ \sqrt{e^{|z|^2}}\exp\left\{-\frac{x^2}{e^{|z|^2}} \right\} + \sqrt{\pi}x\erf \left\{x/e^{|z|^2/2} \right\} \right] dA(z).$$

\subsection{Reciprocal binomial distribution}\label{sec:reciprocal}

\noindent Consider a random polynomial of the form 
$$p_n(z) = \sum_{k=0}^{n}a_{nk}z^k,$$
where $a_{nk}$ are independent random variables with 
$a_{nk} \sim N_{\mathbb{C}} \left( 0, \frac{1}{\binom{n}{k}} \right).$

In this case, the entries $a, b$ and $c$ the 
entries of the covariance matrix $\Sigma$ 
are given as follows. 

\begin{align*}
a & = \sum_{k=0}^{n}\frac{|z|^{2k}}{\binom{n}{k}},\\
b & = \bar{z}\sum_{k=1}^{n}\frac{k|z|^{2k-2}}{\binom{n}{k}} ,\\
c & = \sum_{k=1}^{n}\frac{k^2}{\binom{n}{k}}|z|^{2k-2}.
\end{align*}

\noindent Theorem \ref{thm:general} gives,

\begin{equation}\label{nonasymp5}
\EE |\Lambda_n | =  \sqrt{\pi} \int_{\C} \frac{\exp \{-\frac{1}{a} \}}{a} \left[ \sqrt{\frac{|\Sigma|}{a}}\exp\left\{-\frac{|\Re{b}|^2}{a|\Sigma|} \right\} + \sqrt{\pi} \frac{|\Re b|}{a} \erf \left\{ |\Re b|/\sqrt{a |\Sigma|} \right\} \right] dA(z).
\end{equation} 

We consider now asymptotically (with $n$) the contribution of this integral from $|z| <1$ and $|z| > 1$. If $|z| <1,$ then we observe from the expressions for $a, b$ and $c$ that

\begin{align*}
a & = 1 + \frac{|z|^2}{n} + o(1),\\
b & = \frac{\bar{z}}{n}\left(1 + \frac{4}{n-1}|z|^2 + o(1) \right) ,\\
c & = \frac{1}{n}\left(1 + \frac{8}{n-1}|z|^2 + o(1) \right).
\end{align*}

\noindent This yields $|\Sigma| = ac - |b|^2 = \frac{1}{n}\left(1+o(1)\right),$  $\frac{|\Re b|}{a} = \frac{x}{n}(1+o(1))$ and finally $\frac{|\Re{b}|^2}{a|\Sigma|} = \frac{x^2}{n}(1+o(1)).$ This implies that the integral for $|z| < 1$ is of order $\sqrt{\frac{1}{n}}\left(1+o(1)\right).$ So $\sqrt{n}\EE |\Lambda_n |$ has a finite limit for $z$ in the unit disc.

\vspace{0.1in}

\noindent We next claim that asymptotically, the integral over $|z| > 1$ goes to $0$ (after a scaling by $\sqrt{n}$). Indeed, notice then that 

\begin{align*}
a & = |z|^{2n}\left(1+ \frac{1}{n|z|^2} + o(1) \right),\\
b & = n\bar{z}|z|^{2n-2}\left(1 +  \frac{n-1}{n^2|z|^2}+ o(1) \right) ,\\
c & = n^2|z|^{2n-2}\left(1 + \frac{(n-1)^2}{n^3|z|^2}+ o(1) \right).
\end{align*}.

\noindent From here we can deduce that $|\Sigma| = \frac{|z|^{4n-4}}{n}(1+o(1)).$ This gives that 

$$\sqrt{\frac{|\Sigma|}{a^3}} = \sqrt{\frac{1}{n}}\frac{1}{|z|^{n+2}}(1+o(1)),$$

$$\frac{|\Re b|}{a^2} = \frac{n|x|}{|z|^{2n+2}}(1 + o(1)).$$

\noindent The pointwise limit of the integrand (even if we scale it by $\sqrt{n}$) is clearly $0$ and because of power decay, dominated convergence holds. So the contribution from the exterior of the unit disc to the integral is negligible. 
Ultimately, as $n \rightarrow \infty$, we get that $\sqrt{n}\EE |\Lambda_n |$ approaches
a positive constant given by an integral over $|z| < 1$ independent of $n.$

\vspace{0.1in}

\section{The connected components of a random lemniscate}\label{sec:components}
\noindent In this section, 
we prove asymptotics for the expected number of 
connected components $\mathbb{E}(b_0(\Lambda_n))$
of a lemniscate $\Lambda_n = \{z: |p_n(z)| = 1\}$,
where $p_n$ is a random Kac polynomial, i.e., 
$p_n(z) = \sum_{k=0}^{n}a_kz^k,$ 
with i.i.d. coefficients $a_k \sim N_{\mathbb{C}}(0,1)$.

Consider the set:
\begin{equation}\label{lemn1}
U_n = \{z: |p_n(z)| < 1\}.
\end{equation}
\noindent 
Then $U_n$ is a bounded open set and it 
is a well-known fact that 
the number of connected components of 
$U_n$ is at most $n.$ 
This can be seen from noticing that each component
of $U_n$ must contain a zero of $p$.
Otherwise the maximum principle may be applied to conclude that
the harmonic function $\log|p|$ is constant.
It also follows from the maximum principle that each
component of $U_n$ is simply-connected.
The boundary of $U_n$ is the lemniscate $\Lambda_n$,
which is smooth with probability one.
We conclude that the connected components of
$\Lambda_n$ are in one-to-one correspondence with those of $U_n$.

\subsection{The expectation of the number of connected components: proof of Theorem \ref{thm:cc}}
\label{sec:cc}
Since the number of connected components $b_0(\Lambda_n)$ 
is at most $n$, in order to show that $\E b_0(\Lambda_n) \sim n$
it suffices to prove the lower bound 
$\mathbb{E} b_0(\Lambda_n) \geq n - o(n)$.

Fix $0< \beta < \alpha <1/2$ with $\alpha - \beta > \frac{1}{2} -\alpha$,
and suppose $n$ is large enough that
$$n^{\beta+\frac{1}{2}- 2 \alpha} \exp\{n^{-\alpha}\} < 1.$$
As a certificate for the appearance of a localized component
we will use the following conditions related to the
Taylor expansion of $p(z)$ centered at $\zeta$.
\begin{equation}
\left\{
\begin{aligned}
p(\zeta) &= 0 \\
|p'(\zeta)| &> 2 \cdot n^{1+\alpha} \\
|p^{(k)}(\zeta)| &< n^{k+\frac{1}{2}+\beta} , \quad \text{for } k=2,3,..,n 
\end{aligned}\right.
\label{eq:Taylor}
\end{equation}
These conditions imply that, for any $z$ on the circle defined by
$|z-\zeta| = n^{-1-\alpha}$, we have
\begin{align}
|p(z)| &= \left| p'(\zeta) (z-\zeta) + \sum_{k=2}^n \frac{p^{(k)}(\zeta)}{k!}(z-\zeta)^k \right| \\
&\geq |p'(\zeta) (z-\zeta)| - \left|\sum_{k=2}^n \frac{p^{(k)}(\zeta)}{k!}(z-\zeta)^k \right| \\
&\geq |p'(\zeta)| n^{-1-\alpha} - \sum_{k=2}^n \frac{|p^{(k)}(\zeta)|}{k!}(n^{-1-\alpha})^k \\
&> 2 - \sum_{k=2}^n \frac{n^{(k+\frac{1}{2}+\beta)}}{k!}(n^{-1-\alpha})^k \\
&> 2 - n^{\beta + \frac{1}{2} - 2\alpha} \sum_{k=2}^n \frac{n^{-\alpha (k-2)}}{k!} \\
&> 2 - n^{\beta + \frac{1}{2} - 2\alpha} \exp \{ n^{-\alpha} \} \\
&> 1, \\
\end{align}
so that $p(\zeta)=0$ and $|p(z)|>1$ 
on the circle $|z-\zeta| = n^{-1-\alpha}$.
This ensures that there is a connected component of $\Lambda_n$
contained in the disk $|z-\zeta| < n^{-1-\alpha}$.

In order to estimate the average number of zeros
for which the conditions \eqref{eq:Taylor} are all satisfied,
we will use a modified version of the Kac-Rice formula.
First recall that the Kac-Rice formula for the 
expectation $\E N_p(U)$ of the number of complex zeros of $p$
in a region $U$ states
\begin{align}\label{eq:KRplain}
\E N_p(U) &= \frac{1}{\pi} \int_U \E |p'(z)|^2 \delta(p(z)) dA(z) \\
&= \frac{1}{\pi} \int_U \E \left[ |p'(z)|^2 \, \big| \, p(z)=0 \right] \rho_{p(z)} (0) dA(z),
\end{align}
where $\rho_{p(z)} (0)$ is the marginal probability density of $p(z)$ evaluated at $0$.

We would like to modify \eqref{eq:KRplain} to obtain a lower bound for the expected number $\hat{N}_p$
of zeros satisfying the conditions \eqref{eq:Taylor}. Our approach is based on \cite{AD}, Theorem $5.1.1$. Let $I_1$ be the indicator function of the interval $(2n^{1+\alpha},\infty)$
and $I_k$ be the indicator function of the interval $[0,n^{k+\frac{1}{2}+\beta}).$ Let $T_n(s) := \{z \in \C : e^{-s/n} < |z| < e^{s/n} \}$ and $\hat{N}_p(T_n(s))$ denote the number of zeros satisfying \eqref{eq:Taylor} which lie in the annulus $T_n(s).$
Then we have
\begin{align*}
\E \hat{N}_p &\geq \E \hat{N}_p(T_n(s)) \\
&= \frac{1}{\pi} \int_{T_n(s)} \E |p'(z)|^2 \delta(p(z))
\prod_{k=1}^n I_k(|p^{(k)}(z)|) dA(z) \\
&= \frac{1}{\pi} \int_{T_n(s)}  \E \left[ |p'(z)|^2 
\prod_{k=1}^n I_k(|p^{(k)}(z)|)  \, \big| \, p(z)=0 \right] \rho_{p(z)}(0)dA(z).
\end{align*}

In the above chain, Theorem $5.1.1$ from \cite{AD} was used to go from the first line to the second. Next, for each fixed $s$ the above provides a lower bound on the
average number of connected components
\begin{equation}\label{eq:pivot}
\E b_0(\Lambda_n) \geq \frac{1}{\pi} \int_{T_n(s)}  \E \left[ |p'(z)|^2 \prod_{k=1}^n I_k(|p^{(k)}(z)|)  \, \big| \, p(z)=0 \right] \rho_{p(z)}(0)dA(z).
\end{equation}
The remainder of the proof will establish that the right hand side
of \eqref{eq:pivot}
is asymptotic to a standard Kac-Rice integral of the form \eqref{eq:KRplain}.

Letting $\tilde{I}_k$ denote the indicator function of
$[n^{k+\frac{1}{2}+\beta},\infty)$,
we will use the union-type bound,
\begin{equation}\label{eq:union}
\prod_{k=2}^nI_k(|p^{(k)}(z)|) \geq
1-\sum_{k=2}^n\tilde{I}_k(|p^{(k)}(z)|),
\end{equation}
in order to prove that
\begin{equation}\label{eq:experr}
\E \left[ |p'(z)|^2 
\prod_{k=1}^n I_k(|p^{(k)}(z)|)  \, \big| \, p(z)=0 \right]
\geq \E \left[ |p'(z)|^2 I_1(|p'(z)|) \, \big| \, p(z)=0 \right] - O\left(\exp \left\{-n^{\beta}  \right\} \right) . \\
\end{equation}
First, we use the simple estimate:
\begin{equation}\label{eq:leftover}
\E \left[ |p'(z)|^2 I_1(|p'(z)|) \sum_{k=2}^n \tilde{I}_k(|p^{(k)|}(z)) \, \big| \, p(z)=0 \right] 
\leq \sum_{k=2}^n \E \left[ |p'(z)|^2 \tilde{I}_k(|p^{(k)}(z)|) \, \big| \, p(z)=0 \right].\\
\end{equation}
We estimate each summand above using the Cauchy-Schwarz inequality.
\begin{equation}\label{eq:apply}
\begin{aligned}
\E \left[ |p'(z)|^2 \tilde{I}_k(|p^{(k)}(z)|) \, \big| \, p(z)=0 \right]
&\leq \sqrt{\E \left[ |p'(z)|^4 \, \big| \, p(z)=0 \right] } \sqrt{P(|p^{(k)}(z)| \geq n^{k+\frac{1}{2}+\beta} \big| p(z)=0)} \\
&\leq \sqrt{\E \left[ |p'(z)|^4 \, \big| \, p(z)=0 \right] } \exp \left\{-\frac{n^{2\beta}}{2C_1(s)}  \right\},
\end{aligned}
\end{equation}
where we have used the estimates
\begin{equation}\label{eq:key}
P( |p^{(k)}(z)| \geq n^{k+\frac{1}{2} + \beta} \big| p(z)=0) \leq \exp \left\{-\frac{n^{2k+1+2\beta}}{C_1(s)n^{2k+1}}  \right\} = \exp\left\{ -\frac{n^{2\beta}}{C_1(s)} \right\},
\end{equation}
which follow from 
Lemmas \ref{lemma:conditionalGaussian} and \ref{lemma:overwhelming} below.
By the same lemmas, we have 
$\sqrt{\E \left[ |p'(z)|^4 \, \big| \, p(z)=0 \right] } = O(n^3)$. 

Applying \eqref{eq:apply} to \eqref{eq:leftover}
and relaxing the expression appearing in the exponent to $-n^{\beta}$,
we can neglect the polynomially growing factor 
$\sqrt{\E \left[ |p'(z)|^4 \, \big| \, p(z)=0 \right] } = O(n^3)$
as well as the number of terms $(n-1)$ in the sum.
We thus obtain the bound
\begin{equation}
\E \left[ |p'(z)|^2 I_1(|p'(z)|) \sum_{k=2}^n \tilde{I}_k(|p^{(k)|}(z)) \, \big| \, p(z)=0 \right] 
= O\left(\exp \left\{-n^{\beta}  \right\} \right),
\end{equation}
which establishes \eqref{eq:experr}
by way of the union bound stated in \eqref{eq:union}.

The random variable $p'(z)$
conditioned on $p(z)=0$ is distributed 
as a centered complex Gaussian with variance $\frac{ac-|b|^2}{a}$,
and this implies that $|p'(z)|^2$ conditioned on $p(z)=0$
is distributed as an exponential random variable with parameter 
$\lambda = \left( \frac{ac_1-|b_1|^2}{a} \right)^{-1}$,
so we have
\begin{equation}\label{eq:plaincond}
 \E \left[ |p'(z)|^2 \, \big| \, p(z)=0 \right] 
 = \frac{1}{\lambda} = \frac{ac_1-|b_1|^2}{a},
\end{equation}
and
\begin{align*}
\E \left[ |p'(z)|^2 I_1(|p'(z)|) \, \big| \, p(z)=0 \right] 
&= \int_{4n^{2+2\alpha}}^\infty x \lambda e^{-\lambda x} dx \\
&=  \exp \left\{-n^{2\alpha+2} \lambda \right\} \left( \frac{1}{\lambda} + 4n^{2+2\alpha} \right)\\
&\geq \frac{1}{\lambda} \exp \left\{-n^{2\alpha+2} \lambda \right\}\\
&=\E \left[ |p'(z)|^2 \, \big| \, p(z)=0 \right] \left(1 - O(n^{2\alpha-1}) \right) \end{align*}
where we used \eqref{eq:plaincond} in the last line.

Combining this with \eqref{eq:experr}
in order to reassess \eqref{eq:pivot}
we finally conclude the lower bound
\begin{equation}
\begin{aligned}
\E b_0(\Lambda_n) &\geq 
(1-O(n^{2\alpha-1})) \frac{1}{\pi} \int_{T_n(s)}  \E \left[ |p'(z)|^2 
\, \big| \, p(z)=0 \right] \rho_{p(z)}(0)dA(z), \\
&= (1-O(n^{2\alpha-1})) \E N_p(T_n(s)),
\end{aligned}
\end{equation}
where, as in \eqref{eq:KRplain} 
$N_p(T_n(s))$ denotes 
the number of zeros of $p$ in $T_n(s)$.
We recall \cite{IbZeit} that 
$$\E N_p(T_n(s)) \sim n\left(\frac{1+e^{2s}}{1-e^{2s}} - \frac{1}{s}\right),$$
which implies
\begin{equation}
\liminf_{n \rightarrow \infty} \frac{\E b_0(\Lambda_n)}{n}
\geq \left( 1 - \frac{1}{s} \right).
\end{equation}
This lower bound can be made arbitrarily close to $1$
(by increasing $s$),
and along with the deterministic upper bound
$b_0(\Lambda_n) \leq n$ this shows that the limit
$$\lim_{n \rightarrow \infty} \frac{\E b_0(\Lambda_n)}{n} = 1$$
exists, i.e., $\E b_0(\Lambda_n) \sim n$.
This proves Theorem \ref{thm:cc}.

\begin{lemma}\label{lemma:conditionalGaussian}
Fix $z \in\mathbb{C}$.
The random variable $p^{(k)}(z)$ conditioned on
$p(z)=0$ is distributed as a centered complex Gaussian,
$N_\C(0,\sigma^2)$,
with variance 
$$\sigma^2 = \frac{a c_k - |b_k|^2}{a},$$
where
\begin{equation}
a = K(z,z), \quad b_k = \partial_{z}^k K(z,z), \quad c_k = \partial_{z}^k \partial_{\bar{z}}^k K(z,z).
\end{equation}
\end{lemma}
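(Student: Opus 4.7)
The plan is to reduce this to the Schur-complement formula for the conditional distribution of one coordinate of a jointly complex Gaussian vector given another. The key observation is that both $p(z)$ and $p^{(k)}(z)$ are fixed $\CC$-linear combinations of the independent circularly-symmetric complex Gaussian coefficients $a_0, \dots, a_n$, so the pair $(p(z), p^{(k)}(z))$ is itself a jointly circularly-symmetric complex Gaussian with mean zero.

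Next I would read the entries of the joint covariance off the kernel $K(z,w) = \E[p(z)\overline{p(w)}]$ by differentiating under the expectation. This yields $\E|p(z)|^2 = a$, $\E[p^{(k)}(z)\overline{p(z)}] = b_k$, and $\E|p^{(k)}(z)|^2 = c_k$, matching the definitions in the statement of the lemma. To extract the conditional distribution, I would introduce the orthogonal decomposition $p^{(k)}(z) = (b_k/a)\, p(z) + W$. A direct expansion gives $\E[W\overline{p(z)}] = 0$, and because $(W, p(z))$ is still jointly circular-Gaussian this uncorrelatedness upgrades to independence; a second short computation gives $\E|W|^2 = c_k - |b_k|^2/a = (ac_k - |b_k|^2)/a$. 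Conditioning on $p(z) = 0$ then identifies $p^{(k)}(z)$ with $W$ itself, yielding the claimed $N_\CC(0, \sigma^2)$ distribution.

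I do not anticipate any genuine obstacle: each step is either the definition of a complex Gaussian, the standard fact that uncorrelated jointly circular-Gaussians are independent, or a one-line calculation. The only piece of bookkeeping is to keep the conjugation conventions for $b_k$ consistent with those used for $b$ in Section~\ref{sec:general}, so that the Schur-complement combination produces $|b_k|^2$ rather than $b_k^{2}$ or $\overline{b_k}^{\,2}$.
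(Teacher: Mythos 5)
Your proposal is correct, and it reaches the same Schur-complement variance $(ac_k-|b_k|^2)/a$ by a genuinely different route than the paper. The paper works directly with densities: it writes the joint density $\rho(u,v)$ of $(p(z),p^{(k)}(z))$ as a bivariate complex Gaussian with covariance matrix $\Sigma_k$, forms the conditional density $\rho_{V|U=0}(v)=\rho(0,v)/\rho_U(0)$, and reads off $\frac{a}{\pi|\Sigma_k|}\exp\{-a|v|^2/|\Sigma_k|\}$ after simplifying the quadratic form at $u=0$. You instead use the regression decomposition $p^{(k)}(z)=(b_k/a)p(z)+W$, check $\E[W\overline{p(z)}]=0$, invoke the fact that uncorrelated jointly circularly-symmetric Gaussians are independent, and identify the conditional law with the law of $W$. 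Your route avoids writing out $\Sigma_k^{-1}$ and makes the appearance of the Schur complement conceptually transparent; the paper's route is more computational but has the side benefit of producing the explicit conditional density, which is what actually gets substituted into the modified Kac--Rice integral in the proof of Theorem \ref{thm:cc}. Your covariance identifications $\E|p(z)|^2=a$, $\E[p^{(k)}(z)\overline{p(z)}]=b_k$, $\E|p^{(k)}(z)|^2=c_k$ are consistent with the paper's conventions (differentiate $K(z,w)=\E p(z)\overline{p(w)}$ in the holomorphic variable $k$ times before setting $w=z$), and since $a=K(z,z)\geq 1$ for the Kac kernel there is no degeneracy issue in dividing by $a$.
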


\begin{proof}[Proof of Lemma \ref{lemma:conditionalGaussian}]
Let $\rho(u,v)$ denote the joint density of $(U,V) = (p(\zeta),p^{(k)}(\zeta))$.
The conditional density $\rho_{V|U=0}$ of $V$ given $U=0$
is given by:
\begin{equation}\label{eq:conditdensity}
\rho_{V|U=0} (v) = \frac{\rho(0,v)}{\rho_U(0)},
\end{equation}
where $\rho_U(u) = \frac{1}{\pi a} \exp\left\{-\frac{|u|^2}{a} \right\}$ 
is the marginal density of $U$.

We have
\begin{equation}
\rho(u,v) = \frac{1}{\pi^2 |\Sigma_k|} \exp \{- (u,v)^* \Sigma_k^{-1} (u,v) \},
\end{equation}
where $\Sigma_k$ is the covariance matrix of $(U,V)$,
which can be computed explicitly using the covariance kernel $K(z,w)$:
$$K(z,w) = \EE p(z) \ol{p(w)}.$$
Namely, we have:
$$ \Sigma_k = \left( \begin{array}{cr}
a & b_k \\ \bar{b_k} & c_k
\end{array} \right) ,$$
where
\begin{equation}
a = K(z,z), \quad b_k = \partial_{z}^k K(z,z), \quad c_k = \partial_{z}^k \partial_{\bar{z}}^k K(z,z).
\end{equation}
Applying this to \eqref{eq:conditdensity} we obtain:
\begin{equation}
\rho_{V|U=0} (v) = \frac{\rho(0,v)}{\rho_U(0)} = \frac{a}{\pi |\Sigma_k|} \exp \left\{- \frac{a|v|^2}{|\Sigma_k|} \right\},
\end{equation}
as desired.
\end{proof}

\begin{lemma}\label{lemma:overwhelming}
There exists a positive constant $C_1(s)$ depending on $s$ but independent of $n$, such that

$$ \frac{ac_k - |b_k|^2}{a} \leq C_1(s) n^{2k+1},$$
for all $z \in T_n(s) = \{z\in\C: e^{-s/n} < |z| < e^{s/n} \}$ and $k=1,2,..,n$. Furthermore, there exists $C_2(s)>0$ such that for $z \in T_n(s),$ and for all large enough $n\geq N(s),$ we have

$$ \frac{ac_1-|b_1|^2}{a} \geq C_2(s) n^{3}.$$
\end{lemma}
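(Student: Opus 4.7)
For the upper bound, positive semidefiniteness of $\Sigma_k$ gives $ac_k - |b_k|^2 \geq 0$, so $\frac{ac_k - |b_k|^2}{a} \leq c_k$. Estimating the explicit sum $c_k = \sum_{j=k}^{n}(j!/(j-k)!)^2|z|^{2(j-k)}$ by $j!/(j-k)! \leq n^k$ and, on $T_n(s)$, $|z|^{2(j-k)} \leq e^{2s(j-k)/n} \leq e^{2s}$ yields $c_k \leq e^{2s} n^{2k+1}$, so the bound holds with $C_1(s) = e^{2s}$.

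For the lower bound, the plan is to exploit the identity
$$\frac{|\Sigma|}{a^2} \;=\; \frac{ac_1 - |b_1|^2}{a^2} \;=\; \partial_z\partial_{\bar z}\log K(z,z)$$
together with the explicit Kac kernel $K(z,z) = (1-|z|^{2n+2})/(1-|z|^2)$. Carrying out the logarithmic differentiation---exactly as in the proof of Theorem~\ref{thm:Kac}---gives
$$\frac{|\Sigma|}{a^2} \;=\; \frac{1}{(1-|z|^2)^2} - \frac{(n+1)^2 |z|^{2n}}{(1-|z|^{2n+2})^2}.$$
Next, parametrize $T_n(s)$ by $|z|^2 = e^{2\tau/n}$ with $\tau \in [-s,s]$ and let $n\to\infty$.

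A direct substitution shows that, away from $\tau=0$,
$$\frac{1}{n^2}\cdot \frac{|\Sigma|}{a^2} \;\longrightarrow\; g(\tau) := \frac{1}{4\tau^2} - \frac{e^{2\tau}}{(e^{2\tau}-1)^2} \;=\; \frac{\sinh^2\tau - \tau^2}{4\tau^2 \sinh^2\tau},$$
while simultaneously $a/n \to h(\tau) := (e^{2\tau}-1)/(2\tau)$. Taylor expansion at $\tau = 0$ gives the continuous extensions $g(0)=1/12$ and $h(0)=1$. Since $\sinh^2\tau > \tau^2$ for $\tau\neq 0$, both $g$ and $h$ are strictly positive and continuous on the compact interval $[-s,s]$. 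Once uniform convergence is in hand, $\frac{|\Sigma|}{a} = \frac{|\Sigma|}{a^2}\cdot a \sim n^3\, g(\tau)h(\tau)$ uniformly for $z \in T_n(s)$; setting $C_2(s) := \tfrac12\min_{[-s,s]}(gh) > 0$ then gives the desired $\frac{|\Sigma|}{a} \geq C_2(s) n^3$ for all $n \geq N(s)$.

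The main technical obstacle is exactly this uniform convergence near $\tau = 0$, since each of the two summands in $|\Sigma|/a^2$ individually diverges like $n^2/(4\tau^2)$ and only their difference has a finite limit. This can be handled by a second-order Taylor expansion of $(1-e^{2\tau/n})^{-2}$ and $(1-e^{2\tau(n+1)/n})^{-2}$ in $1/n$ that is uniform on $\tau \in [-s,s]$; alternatively, one may observe that $|\Sigma|/a^2$ extends to a real-analytic function of $(u,1/n)$ in a neighborhood of $(1,0)$ and pin down the removable singularity using the exact value at $|z|=1$, where $a = n+1$, $|b_1|^2 = n^2(n+1)^2/4$, $c_1 = n(n+1)(2n+1)/6$, and hence $\frac{ac_1-|b_1|^2}{a} = \frac{n(n+1)(n+2)}{12}$, consistent with $g(0)h(0) = 1/12$ in the limit.
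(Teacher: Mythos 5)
Your upper bound is identical to the paper's: both reduce to $c_k$ via positive semidefiniteness and bound $j(j-1)\cdots(j-k+1)\le n^k$ and $|z|^{2(j-k)}\le e^{2s}$ on $T_n(s)$. Your lower bound, however, takes a genuinely different route. The paper bounds $a$, $c_1$ and $|b_1|^2$ separately by elementary termwise estimates ($a\ge e^{-2s}(n+1)$, $c_1\ge e^{-2s}n(n+1)(2n+1)/6$, $|b_1|^2\le e^{2s/n}e^{4s}n^2(n+1)^2/4$) and then combines them; you instead use the exact identity $\frac{ac_1-|b_1|^2}{a^2}=\partial_z\partial_{\bar z}\log K(z,z)=\frac{1}{(1-|z|^2)^2}-\frac{(n+1)^2|z|^{2n}}{(1-|z|^{2n+2})^2}$ and carry out a uniform asymptotic analysis on $|z|^2=e^{2\tau/n}$, $\tau\in[-s,s]$, arriving at $\frac{ac_1-|b_1|^2}{a}\sim n^3 g(\tau)h(\tau)$ with $g(\tau)=\frac{\sinh^2\tau-\tau^2}{4\tau^2\sinh^2\tau}>0$ and $h(\tau)=\frac{e^{2\tau}-1}{2\tau}>0$. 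This buys something real: the quantity $\frac{ac_1-|b_1|^2}{a}=c_1-\frac{|b_1|^2}{a}$ is a difference of two terms of the same order $n^3$, and separate upper/lower bounds on the two pieces lose multiplicative constants ($e^{-2s}/3$ versus $e^{6s}/4$ at leading order) whose difference is not manifestly positive once $s$ is not small --- whereas your method tracks the cancellation exactly and yields positivity for every $s$, which is what the application (letting $s\to\infty$ at the end of the proof of Theorem \ref{thm:cc}) actually requires. The price is the uniformity of the convergence near $\tau=0$, where the two summands individually blow up like $n^2/(4\tau^2)$; you correctly identify this as the only remaining technical point and offer two workable resolutions (a uniform second-order expansion in $1/n$, or real-analyticity in $(|z|^2,1/n)$ anchored by the exact value $\frac{n(n+1)(n+2)}{12}$ at $|z|=1$, which you verify is consistent with $g(0)h(0)=1/12$). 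Carrying out one of these in full would make the argument complete; as written it is a sketch at that one point, but the strategy is sound and all the stated formulas check out.
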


\begin{proof}[Proof of Lemma \ref{lemma:overwhelming}]
\noindent For the first estimate, we note that $\dfrac{ac_k - |b_k|^2 }{a}\leq c_k$ and so it is enough to find an upper bound for $c_k.$ We have

$$c_k = \mathbb{E}\left(p^{(k)}(z)\overline{p^{(k)}(z)}\right) = \sum_{j=k}^{n}\left[j(j-1)(j-2)..(j-(k-1)\right]^2|z|^{2j-2k}$$

\noindent Using the above expression, we observe that for $z\in T_n(s),$

\begin{equation}\label{ubc}
c_k\leq e^{2s}\sum_{j=k}^{n}\left[j(j-1)(j-2)..(j-(k-1)\right]^2\leq e^{2s}\sum_{j=k}^{n} n^{2k}\leq e^{2s}n^{2k+1}.
\end{equation}

\noindent Before proving the second inequality we recall that 
$$a = \sum_{k=0}^n |z|^{2k}, \hspace{0.05in} b_1 = \bar{z} \sum_{k=1}^n k |z|^{2k-2}$$

$$c_1 = \sum_{k=1}^n k^2 |z|^{2k-2}.$$
\noindent For $z \in T_n(s),$ we now estimate as follows:

\begin{equation}\label{lba}
a = \sum_{k=0}^n |z|^{2k}\geq (e^{-s/n})^{2n}(n+1) = e^{-2s}(n+1).
\end{equation}

\noindent  A similar reasoning gives $a\leq (n+1)e^{2s}.$  
We next proceed to bound $c_1$ and $|b_1|^2.$ 
\begin{equation}\label{lbc}
c_1 = \sum_{k=1}^n k^2 |z|^{2k-2}\geq (e^{-s/n})^{2n-2}\sum_{k=1}^n k^2 = (e^{-s/n})^{2n-2}n(n+1)(2n+1)/6
\end{equation}

\begin{eqnarray}\label{ubb}
|b_1|^2& = |z|^2\left(\sum_{k=1}^n k |z|^{2k-2}\right)^2\\
     &\leq e^{2s/n}e^{4s}\left(\sum_{k=1}^{n} k\right)^2 \\
     & =  e^{2s/n}e^{4s}\dfrac{n^2 (n+1)^2}{4}.
\end{eqnarray}

\noindent Combining all the above estimates, we obtain that for large $n$ 

$$\dfrac{ac_1 - |b|^2}{a}\geq C_2(s)n^3.$$
\noindent This proves the second estimate and concludes the proof of the lemma.

 
\end{proof}

\subsection{Existence of a giant component: proof of Theorem \ref{thm:giant}}
\label{sec:giant}
We now show that a giant component exists 
with positive probability (independent of $n$).

\begin{lemma}\label{giant}
Consider a sequence of random polynomials $p_n(z) = \sum_{k=0}^{n}a_kz^k,$ where $a_k$ are i.i.d $\sim N_{\mathbb{C}}(0,1).$ Let $U_n$ be as in \eqref{lemn1} and let $r\in (0,1)$ be given. 
Then, there exist $N = N(r)\in\mathbb{N}$ and $c_r > 0$ such that for all $n\geq N$
\begin{equation}\label{eq:giantdomain}
\mathbb{P}\left(B(0,r)\hspace{0.05in}\mbox{is contained in a component of $U_n$}\hspace{0.05in}\right) > c_r .
\end{equation}
\end{lemma}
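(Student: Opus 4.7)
The plan is to reduce the event to a uniform sup-norm bound on $p_n$ over $\overline{B(0,r)}$, dominate that sup-norm by a random series whose law does not depend on $n$, and then produce a positive probability that this majorant is less than one.

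First, $B(0,r)$ is connected, so the event that $B(0,r)$ lies in a single connected component of $U_n$ coincides with the event $\overline{B(0,r)}\subset U_n$. By the maximum principle applied to $|p_n|$, this is in turn equivalent to $\max_{|z|\le r}|p_n(z)|<1$. Thus the lemma reduces to establishing an $n$-independent lower bound on $\mathbb{P}\bigl(\max_{|z|\le r}|p_n(z)|<1\bigr)$.

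Second, I would pass to the $n$-free majorant. For $|z|\le r$, the triangle inequality gives
\[
|p_n(z)|\le\sum_{k=0}^{n}|a_k|r^k\le X,\qquad X:=\sum_{k=0}^{\infty}|a_k|r^k.
\]
Since $r<1$ and $\mathbb{E}|a_k|$ is a fixed finite constant, $X$ is an a.s.\ finite random variable whose distribution does not depend on $n$. On the event $\{X<1\}$ one has $\max_{|z|\le r}|p_n(z)|<1$ for every $n$, so it is enough to prove $\mathbb{P}(X<1)>0$; the lemma then follows with $c_r:=\mathbb{P}(X<1)$ and any $N(r)$ (indeed $N=0$).

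Third, to produce $\mathbb{P}(X<1)>0$ I would split the series at a large but fixed index $K=K(r)$. Choose $K$ so large that the tail $T_K:=\sum_{k>K}|a_k|r^k$ satisfies $\mathbb{E}T_K<1/4$; Markov's inequality then yields $\mathbb{P}(T_K<1/2)\ge 1/2$. Independently, the event $\bigl\{|a_k|<\tfrac{1}{2(K+1)}\text{ for }k=0,1,\dots,K\bigr\}$ forces $\sum_{k\le K}|a_k|r^k<1/2$ and has strictly positive probability, being an intersection of finitely many independent Gaussian events. Because the two events depend on disjoint blocks of coefficients they are independent, so their joint probability is positive and $\{X<1\}$ contains it.

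There is no real obstacle; the argument is soft. The one subtlety worth flagging is that any estimate that scales with $n$ (e.g.\ applying Markov directly to $\max_{|z|\le r}|p_n(z)|$ or using a tail bound whose constants depend on $n$) would defeat the purpose; this is why the key move is the domination by the $n$-free series $X$, which transfers the problem to a single fixed random variable.
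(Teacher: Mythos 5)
Your proof is correct and takes essentially the same approach as the paper: both dominate $|p_n|$ on the disk by the $n$-independent random series $\sum_{k\ge 0}|a_k|r^k$, split it into a finite head (made less than $1/2$ with positive probability via the Gaussian coefficients) and a tail (controlled through its finite expectation), and multiply the two probabilities using independence of the disjoint coefficient blocks. The only cosmetic difference is that you apply Markov's inequality directly to the tail beyond a fixed index $K(r)$, whereas the paper thresholds the full series $g(r)$ at some level $a_r$ and absorbs the factor $r^N$, which is why the paper's statement carries an $N(r)$ while your version works for every $n$.
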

\begin{proof}
For each $r\in (0,1),$ consider $g(r) = \sum_{k=0}^{\infty}|a_k|r^k.$ Then $g$ is a random function and $\mathbb{E}(g(r)) < \infty.$ Therefore, there exist $a_r, b_r > 0$ such that 

$$\mathbb{P}\left(g(r) < a_r\right) > b_r > 0.$$

\noindent For a given $r\in (0,1)$ choose $N$ so that $r^N < \frac{1}{2a_r}.$ Then, for $n\geq N$

\begin{align*}
\mathbb{P}\left(\sup_{\partial B_r}|p_n| < 1 \right)& \geq \mathbb{P}\left(|a_0| + |a_1|r +... |a_{N-1}|r^{N-1} < \frac{1}{2} ; r^N\sum_{j=N}^n|a_j|r^{j-N} < \frac{1}{2}\right), \\
&\geq\mathbb{P}\left(|a_0| + |a_1|r +... |a_{N-1}|r^{N-1} < \frac{1}{2}\right)\mathbb{P}\left(r^Ng(r) <\frac{1}{2} \right) \\
& \geq\eta_{r}\mathbb{P}\left(g(r) < a_r\right)\\
& = \eta_{r} b_r,
\end{align*}
where $\eta_{r} = \mathbb{P}\left(|a_0| + |a_1|r +... |a_{N-1}|r^{N-1} < \frac{1}{2}\right) > 0$ follows from the Gaussian nature of the coefficients. Note that we have used the independence of 
$a_k$'s to go from the first line to the second. This finishes the proof of the Lemma.
\end{proof}

In the case the event in \eqref{eq:giantdomain} occurs, 
by the isoperimetric inequality,
the associated connected component of $\Lambda_n$
has length at least $2\pi r$.
This proves Theorem \ref{thm:giant}.

\vspace{0.1in}

\noindent \textbf{Acknowledgements:} The authors are grateful to Antonio Lerario and Manjunath Krishnapur for helpful discussions and suggestions.

\vspace{0.1in}
 
{\em 

Department of Mathematical Sciences

Florida Atlantic University

Boca Raton, FL 33431

Email: elundber{@}fau.edu}

\vspace{0.1in}

\em

Department of Mathematics

Oklahoma State University

Stillwater, OK 74074

Email: koushik.ramachandran@okstate.edu


\begin{thebibliography}{13}



\bibitem{AD}
R. J. Adler, \emph{The geometry of random fields}, Wiley, New York, 1981.

\bibitem{Borwein}
P. Borwein, \emph{The Arc Length of the Lemniscate ${|p(z)| = 1}$},
Proc. Amer. Math. Soc., 123 (1995), 797-799.

\bibitem{Danchenko}
V. I. Danchenko, \emph{The lengths of lemniscates. Variations of rational functions}
(Russian) Mat. Sb. 198 (2007), no. 8, 51-58; 
translation in Sb. Math. 198 (2007), no. 7-8, 1111-1117.

\bibitem{EK1995}
A. Edelman and E. Kostlan, 
\emph{How many zeros of a random polynomial are real?}, 
Bull. Amer. Math. Soc. 32 (1995), 1-37.

\bibitem{Erdos}
P. Erd\"os, F. Herzog, and G. Piranian, 
\emph{Metric  properties  of  polynomials}, J.  Anal.  Math. 6 (1958), 125-148.

\bibitem{Erdos2}
P. Erd\"os, \emph{Extremal problems on polynomials}, 
Approximation Theory II, Academic Press, New York, 1976, pp. 347-355.

\bibitem{EremHay}
A. Eremenko, W. Hayman, \emph{On the length of lemniscates}, 
Mich. Math. J., 46 (1999), 409-415.

\bibitem{FryntovNazarov} 
A. Fryntov and F. Nazarov, 
\emph{New estimates for the length of the Erd\"os-Herzog-Piranian lemniscate}, 
Linear and Complex Analysis 226, Amer. Math. Soc. Transl. Ser. 2

\bibitem{Hor} 
L. Hormander, \emph{The analysis of Linear Partial differential operators I}, 
Springer-Verlag, 2nd Edition, 1989.

\bibitem{IbZeit}
I. Ibragimov, O. Zeitouni, \emph{On roots of random polynomials},
Trans. Amer. Math. Soc. 349 (1997), 2427-2441.

\bibitem{Kac43} 
M. Kac: \emph{On the average number of real roots of a random algebraic equation},  
Bull. Amer. Math. Soc., 49 (1943), 314-320.

\bibitem{KuTk}
O.S. Kuznetsova, V.G. Tkachev, \emph{Length functions of lemniscates}, 
Manuscripta Mathematica, 112 (2003), 519-538.

\bibitem{Lemni}
A. Lerario, E. Lundberg: \emph{On the geometry of random lemniscates},  
to appear in Proc. London Math. Soc.

\bibitem{SZ}
B. Shiffman, S. Zelditch, 
\emph{Equilibrium distribution of zeros of random polynomials},
Int. Math. Res. Not. (2003), 25-49.

\bibitem{ZZ}
O. Zeitouni, S. Zelditch, \emph{Large deviations of empirical measures of 
zeros of random polynomials}, Int. Math. Res. Not., 20 (2010), 3935-3992.



 \end{thebibliography}
\end{document}